\newtheorem{remark}[theorem]{{\it Remark }\rm }
\newcommand{\tcr}{}
\newcommand{\dx}{\,\mathrm{d}x}
\newcommand{\dt}{\,\mathrm{d}t}
\newcommand{\Pb}{\eqref{eq_Pp}\xspace}
\newcommand{\PbM}{\eqref{eq_PbM}\xspace}
\newcommand{\PbMk}{\mbox{\rm (P$_{M_k}$)}\xspace}
\newcommand{\Pbn}{\eqref{eq_Pn}\xspace}
\newcommand{\PbMn}{\eqref{eq_PbMN}\xspace}
\newcommand{\ds}{\,\mathrm{d}s}
\newcommand{\e}{{\rm e}}
\def\R{\mathbb R}
\title{{A Note on Existence of Solutions to Control Problems of Semilinear Partial Differential Equations}
\thanks{The first author was supported by MCIN/ AEI/10.13039/501100011033/ under research project PID2020-114837GB-I00.
The second author was partially supported by the German Research Foundation DFG under project grant Wa 3626/3-2.
}}
\author{Eduardo Casas\thanks{Departmento de Matem\'{a}tica Aplicada y Ciencias de la Computaci\'{o}n, E.T.S.I. Industriales y de Telecomunicaci\'on, Universidad de Cantabria, 39005 Santander, Spain, {\tt eduardo.casas@unican.es}.}
\and Daniel Wachsmuth\thanks{Institut f\"ur Mathematik, Universit\"at W\"urzburg, 97074 W\"urzburg, Germany, {\tt  daniel.wachsmuth@mathematik.uni-wuerzburg.de}.}
}
\begin{document}
\maketitle

\begin{abstract}
In this paper, we study optimal control problems of semilinear elliptic and parabolic equations.
A tracking cost functional, quadratic in the control and state variables, is considered.
No control constraints are imposed. We prove that the corresponding state equations are well-posed for controls in $L^2$.
However, it is well-known that in the $L^2$ framework the mappings involved in the control problem are not
Frechet differentiable in general, which makes any analysis of the optimality conditions challenging.
Nevertheless, we prove that every $L^2$ optimal control belongs to $L^\infty$,
and consequently standard optimality conditions are available.
\end{abstract}

\begin{keywords}
optimal control,  existence of solutions, semilinear partial differential equations
\end{keywords}

\begin{AMS}
35J61, 
35K58, 
49J20, 
49K20  
\end{AMS}

\section{Introduction}
\label{S1}
In this paper, we study the following optimal control problem
\begin{equation} \label{eq_Pp} \tag{P}
 \inf_{u \in L^2(Q)} J(u):= \frac{1}{2}\int_Q [(y_u - y_d)^2 + \alpha u^2]\dx\dt,
\end{equation}
where $\alpha > 0$ and $y_u$ is the solution of the semilinear parabolic equation
\begin{equation}
\left\{\begin{array}{l}\displaystyle\frac{\partial y}{\partial t} + Ay + f(y) =  u \ \mbox{ in } Q = \Omega \times (0,T),\\y  =  0\ \mbox{ on } \Sigma = \Gamma \times (0,T),\ \ y(x,0) = y_0(x)\ \text{ in } \Omega.\end{array}\right.
\label{E1.1}
\end{equation}
Here, $A$ denotes an elliptic operator in the bounded domain $\Omega \subset \mathbb{R}^n$, $n\ge 2$, whose boundary is denoted by $\Gamma$, $T \in (0,\infty)$ is fixed, $y_0 \in L^\infty(\Omega)$, and $f:\mathbb{R} \longrightarrow \mathbb{R}$ is a given function.
Additionally, we assume that $y_d \in L^p(0,T;L^q(\Omega))$ with $p, q \in [2,\infty]$ and $\frac{1}{p} + \frac{n}{2q} < 1$ is a given function.
Assumptions on the nonlinear term $f$ in the state equation will be established later.
Let us emphasize that we do not impose an upper bound on $n$ nor a growth condition on $f$.

In many papers, the authors assume box control constraints in the formulation of the problem \Pb; see, for instance,
\cite{CasasHerzogWachsmuth2017,Casas-Mateos2020,Casas-Troltzsch2021,Neitzel-Vexler2012}, \cite[Chapter 5]{Troltzsch2010}.
That is because bounded controls $u$ lead to solutions $y_u$ of \eqref{E1.1} that are functions of $L^\infty(Q)$.
This boundedness of the state is crucial to derive first and second order optimality conditions for local or global minimizers.
Indeed, the $C^1$ or $C^2$ differentiability of the superposition operator $y \to f(y)$  for highly nonlinear functions $f$ requires the boundedness of $y$.
Moreover, as far as we know, the well posedness of the state equation \eqref{E1.1} has not been studied for controls $u \in L^2(Q)$.
In some recent papers, see \cite{Casas2018,Casas-Kunisch2022,CMR2017}, the existence of global minimizers to \Pb in $L^\infty(Q)$ has been proven
in the absence of control constraints or for unbounded control sets with the restriction $n\le 3$ on the dimension. \tcr{The novelties of our paper with respect to these previous results are the following: first we prove that the state equation \eqref{E1.1} is well posed for $L^2(Q)$ controls, and the associated control problem \Pb has at least one global minimizer $\bar u$ in $L^2(Q)$; second we prove that any local minimizer of \Pb in the $L^2(Q)$ sense is an element of $L^\infty(Q)$. Usually, this regularity follows from the optimality conditions satisfied by $\bar u$, but we cannot get such conditions due to the lack of differentiability of the mapping $y \to f(y)$, since the boundedness of the state $\bar y$ corresponding to $\bar u$ cannot be deduced for $L^2(Q)$ controls. Therefore, our approach is necessarily different of the one used in the previous papers.}

In the second part of the paper, we will prove similar results for a Neumann boundary control problem of a semilinear elliptic equation. \tcr{The approach used for a Neumann boundary control can be applied to the case of a distributed control problem; see Remark \ref{R4.1}.}
\tcr{
Classical results on existence of optimal controls subject to box constraints can be found in \cite[Section 4.4]{Troltzsch2010}.
}
The reader is referred to \cite{CKT2022} for the proof of existence of an optimal control in $L^\infty(\Omega)$ for distributed control problems of arbitrary space dimension
\tcr{
without box constraints.
However, in \cite{CKT2022} the analysis of the state equation for the controls in $L^2(\Omega)$ is not performed and, consequently, the existence of minimizers in
$L^2(\Omega)$ is not proven, which are ultimately functions of $L^\infty(\Omega)$.
}

The plan of this paper is as follows.
In section \ref{S2} we investigate \Pb.
First, we analyze the well posedness of the state equation \eqref{E1.1}, see Section \ref{S2.1}, Theorem \ref{T2.1},
where for every control $u \in L^2(Q)$ the existence and uniqueness of a solution in $W(0,T)$ is established.
We also provide an example showing that the  state associated with a control of $L^2(Q)$ does not need to be a bounded function if $n > 1$, see Section \ref{S2.2}.
In Section \ref{S2.3}, we prove that \Pb has at least one global minimizer $\bar u$ in $L^2(Q)$.
Then, we demonstrate that any local or global minimizer of \Pb is an element of $L^\infty(Q)$ in Section \ref{S2.4}.
In the last section of the paper, the same study is applied to a Neumann boundary control problem for a semilinear elliptic equation in dimension $n > 2$.

\section{Optimal distributed control of a semilinear parabolic equation}
\label{S2}

\subsection{Analysis of the State Equation}
\label{S2.1}

We make the following assumptions on the equation \eqref{E1.1},
which are assumed to hold throughout the section.

{\bfseries (A1)}
We assume that $\Omega$ is an bounded domain in $\mathbb{R}^n$, $n \ge 2$, with boundary denoted by $\Gamma$, and $A$ denotes a second-order elliptic operator in $\Omega$ of the form
\[
Ay=-\sum_{i,j=1}^{n}\partial_{x_j}(a_{ij}(x)\partial_{x_i}y) + a_0(x)
\]
with coefficients $a_{ij}, a_0 \in L^\infty(\Omega)$ satisfying for some $\Lambda_A > 0$
\[
\Lambda_A|\xi|^2\leq\sum_{i,j=1}^{{n}} a_{ij}(x)\xi_i\xi_j\ \quad
\forall \xi\in\mathbb{R}^n \text{ and } a_0(x) \ge {0}\ \mbox{ for a.e. } x\in\Omega.
\]

{\bfseries (A2)}
$f: \mathbb{R} \longrightarrow \mathbb{R}$ is function of class $C^1$ satisfying that
\begin{equation}
f(0) = 0\ \text{ and } \ \exists\Lambda_f \ge 0 \text{ such that } f'(s) \ge -\Lambda_f\ \forall s \in \mathbb{R}.
\label{E2.1}
\end{equation}

{\bfseries (A3)}
$\alpha>0$,
$y_0 \in L^\infty(\Omega)$, $y_d \in L^p(0,T;L^q(\Omega))$ with $p, q \in [2,\infty]$ and $\frac{1}{p} + \frac{n}{2q} < 1$.


\tcr{
For convenience, we work with the norm  $\|y\|_{H_0^1(\Omega)} := \|\nabla y \|_{L^2(\Omega)}$.
}
As usual, we denote $W(0,T) =L^2(0,T;H_0^1(\Omega)) \cap H^1(0,T;H^{-1}(\Omega))$.
Then, we have the following existence and uniqueness result for a solution to \eqref{E1.1}.

\begin{theorem}
\label{thm_exist_sol_stateeq_l2}
For every $u \in L^2(Q)$, equation \eqref{E1.1} has a unique solution $y_u \in W(0,T)$.
Moreover, $f(y) \in L^2(Q)$, and there exists a constant $C$ \tcr{depending on $\|y_0\|_{L^\infty(\Omega)}$, but} independent of $u$ such that
\begin{equation}
\|y_u\|_{W(0,T)} + \|f(y_u)\|_{L^2(Q)} \le C(\|u\|_{L^2(Q)} + \|y_0\|_{L^\infty(\Omega)}).
\label{E2.2}
\end{equation}
In addition, if $u_k \rightharpoonup u$ in $L^2(Q)$, then $y_{u_k} \rightharpoonup y_u$ in $W(0,T)$ and $f(y_{u_k}) \rightharpoonup f(y)$ in $L^2(Q)$ hold.
\label{T2.1}
\end{theorem}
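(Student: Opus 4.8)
The plan is to combine a truncation of the nonlinearity with two a priori estimates and a compactness argument, while uniqueness follows directly from the monotonicity built into (A2). For uniqueness, given two solutions $y_1,y_2$, I would test the equation satisfied by $w=y_1-y_2$ with $w$ itself; using $\langle Aw,w\rangle\ge\Lambda_A\|\nabla w\|_{L^2(\Omega)}^2\ge 0$ and the one-sided Lipschitz bound $(f(y_1)-f(y_2))(y_1-y_2)\ge-\Lambda_f\,w^2$ coming from $f'\ge-\Lambda_f$, one gets $\tfrac12\frac{d}{dt}\|w\|_{L^2(\Omega)}^2\le\Lambda_f\|w\|_{L^2(\Omega)}^2$, and since $w(0)=0$ Gronwall's lemma forces $w\equiv0$. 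Note that this pairing is legitimate precisely because solutions are required to satisfy $f(y_i)\in L^2(Q)$.

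For existence I would introduce, for $k\in\mathbb N$, a globally Lipschitz truncation $f_k$ that coincides with $f$ on $[-k,k]$, is extended affinely outside, and still satisfies $f_k(0)=0$ and $f_k'\ge-\Lambda_f$. Because $f_k$ is Lipschitz and monotone up to the shift $g_k:=f_k+\Lambda_f\,\mathrm{id}$, the truncated equation admits a unique solution $y_k\in W(0,T)$ by the standard theory for monotone Lipschitz perturbations of the heat operator. Testing this equation with $y_k$, and using $\int_\Omega f_k(y_k)y_k\ge-\Lambda_f\|y_k\|_{L^2(\Omega)}^2$ together with Poincar\'e's inequality and Gronwall's lemma, yields a bound for $\|y_k\|_{L^\infty(0,T;L^2(\Omega))}+\|y_k\|_{L^2(0,T;H_0^1(\Omega))}$ in terms of $\|u\|_{L^2(Q)}$ and $\|y_0\|_{L^2(\Omega)}$ that is uniform in $k$.

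The crux of the argument --- and, since no growth condition on $f$ is assumed, the step I expect to be the main obstacle --- is to bound $f_k(y_k)$ in $L^2(Q)$ uniformly in $k$. Here I would test the truncated equation with $f_k(y_k)\in L^2(0,T;H_0^1(\Omega))$. The parabolic term becomes $\int_\Omega F_k(y_k(T))-\int_\Omega F_k(y_0)$ with $F_k(s)=\int_0^s f_k$, the elliptic term is bounded below by $-C$ times the (already controlled) energy through $f_k'\ge-\Lambda_f$, and the right-hand side $\int_Q u\,f_k(y_k)$ is split by Young's inequality so that $\tfrac12\|f_k(y_k)\|_{L^2(Q)}^2$ is absorbed on the left. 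The two initial/terminal contributions are then controlled as follows: the lower bound $F_k(s)\ge-\tfrac{\Lambda_f}{2}s^2$, a consequence of $g_k$ being nondecreasing with $g_k(0)=0$, turns $-\int_\Omega F_k(y_k(T))$ into an energy-controlled term, while $\int_\Omega F_k(y_0)\le|\Omega|\,\max_{|s|\le\|y_0\|_{L^\infty(\Omega)}}|F(s)|$ is finite and independent of $k$. This last estimate is exactly where the hypothesis $y_0\in L^\infty(\Omega)$ enters and why the constant $C$ in \eqref{E2.2} must depend on $\|y_0\|_{L^\infty(\Omega)}$. Reading the equation as $\partial_t y_k=u-Ay_k-f_k(y_k)$ then bounds $\partial_t y_k$ in $L^2(0,T;H^{-1}(\Omega))$, so $\{y_k\}$ is bounded in $W(0,T)$.

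To finish, I would extract a subsequence with $y_k\rightharpoonup y$ in $W(0,T)$ and $f_k(y_k)\rightharpoonup\chi$ in $L^2(Q)$; the compact embedding $W(0,T)\hookrightarrow\hookrightarrow L^2(Q)$ (Aubin--Lions) gives $y_k\to y$ strongly in $L^2(Q)$ and a.e. For a.e.\ $(x,t)$ the sequence $y_k(x,t)$ is eventually trapped in a fixed compact interval, on which $f_k=f$ for $k$ large, so $f_k(y_k)\to f(y)$ a.e.; combined with the $L^2(Q)$ weak convergence and the uniform $L^2$ bound (Vitali), this identifies $\chi=f(y)$. Passing to the limit in the weak formulation, all remaining terms being linear, shows that $y=y_u$ solves \eqref{E1.1}, and weak lower semicontinuity of the norms transfers the uniform bounds into \eqref{E2.2}. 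The final weak-continuity claim is then immediate: if $u_k\rightharpoonup u$ in $L^2(Q)$, then $\{u_k\}$ is bounded, so by \eqref{E2.2} the states $y_{u_k}$ are bounded in $W(0,T)$ and $f(y_{u_k})$ in $L^2(Q)$; the same compactness-and-identification argument shows that every subsequential weak limit solves \eqref{E1.1} with datum $u$, hence equals $y_u$ by uniqueness, and a standard subsequence argument upgrades this to convergence of the whole sequence.
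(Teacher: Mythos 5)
Your proposal is correct and follows essentially the same route as the paper's proof: truncate $f$, solve the truncated problem by a standard result, get the energy bound by testing with $y_k$ (your Gronwall step is equivalent to the paper's exponentially weighted test function $\e^{-2\Lambda_f s}y_k$), obtain the crucial uniform $L^2(Q)$ bound on $f_k(y_k)$ by testing with $f_k(y_k)$, bounding the primitive below by $F_k(s)\ge -\tfrac{\Lambda_f}{2}s^2$ and controlling $\int_\Omega F_k(y_0)\dx$ through $y_0\in L^\infty(\Omega)$, and conclude via compactness, a.e.\ identification of the weak limit of $f_k(y_k)$, and monotonicity-based uniqueness. The only immaterial deviations are your affine extension of $f$ outside $[-k,k]$ versus the paper's constant extension $f(P_k(\cdot))$, and your appeal to monotone-operator theory instead of Schauder's fixed point theorem for the truncated equation.
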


\begin{proof}
For every integer $k \ge \|y_0\|_{L^\infty(\Omega)}$ we set $f_k(s) = f\big(\tcr{P_k}(s)\big)$ with $\tcr{P_k}(s) = \min\{\max\{-k,s\},+k\}$.
By a standard application of Schauder's fixed point theorem we infer the existence of a function $y_k \in W(0,T)$ satisfying
\begin{equation}
\left\{\begin{array}{l}\displaystyle\frac{\partial y_k}{\partial t} + Ay_k + f_k(y_k) =  u \ \mbox{ in } Q,\\y_k  =  0\ \mbox{ on } \Sigma,\ \ y_k(0,x) = y_0(x)\ \text{ in } \Omega;\end{array}\right.
\label{E2.3}
\end{equation}
\tcr{
see, for instance, \cite{Casas-Chrysafinos2018} or \cite[Theorem 5.5]{Troltzsch2010}.}
Moreover, testing \eqref{E2.3} with $\e^{-2\Lambda_fs}y_k(s)$ and integrating with respect to $s$, we infer for every $t \in (0,T]$
\begin{multline*}
\frac{1}{2}\e^{-2\Lambda_ft}\|y_k(t)\|^2_{L^2(\Omega)} + \Lambda_f\int_0^t\e^{-2\Lambda_fs}\|y_k(s)\|^2_{L^2(\Omega)}\ds
+ \e^{-2\Lambda_fT} \Lambda_A\|y_k\|^2_{L^2(0,t;H_0^1(\Omega))}\\
 + \int_0^t\int_\Omega\e^{-2\Lambda_fs}f_k(y_k)y_k\dx\ds \\
\begin{aligned}
&\le \int_0^t\int_\Omega\e^{-2\Lambda_fs} uy_k\dx  \color{black}\ds + \frac{1}{2}\|y_0\|^2_{L^2(\Omega)}\\
 &\le C_\Omega\|u\|_{L^2(0,t;L^2(\Omega))}\|y_k\|_{L^2(0,t;H_0^1(\Omega))}
 \color{black}{
 + \frac{1}{2}\|y_0\|^2_{L^2(\Omega)}
 }
 \\
& \le \frac{C_\Omega^2}{2\Lambda_A}{\e^{2\Lambda_fT}}\|u\|^2_{L^2(0,t;L^2(\Omega))} + \frac{\Lambda_A}{2}{\e^{-2\Lambda_fT}}\|y_k\|^2_{L^2(0,t;H^1_0(\Omega))}  + \frac{1}{2}\|y_0\|^2_{L^2(\Omega)}.
\end{aligned}
\end{multline*}
\tcr{
With \eqref{E2.1}
and the mean value theorem,
}
we get that $f_k(y_k)y_k \ge -\Lambda_fy_k^2$.
Inserting this
\tcr{
lower bound into the
}
inequality above, we obtain that the sum of the second and fourth integrals of the left hand side is non negative,
\tcr{
i.e.,
\[ \Lambda_f\int_0^t\e^{-2\Lambda_fs}\|y_k(s)\|^2_{L^2(\Omega)}\ds
 + \int_0^t\int_\Omega\e^{-2\Lambda_fs}f_k(y_k)y_k\dx\ds  \ge0.
\]
}
This leads to
\begin{equation}
\|y_k\|_{L^\infty(0,T;L^2(\Omega))} + \|y_k\|_{L^2(0,T;H_0^1(\Omega))} \le C_1\Big(\|u\|_{L^2(Q)} + \|y_0\|_{L^2(\Omega)}\Big),
\label{E2.4}
\end{equation}
where $C_1$ is independent of $u$ and $y_0$.
Now, we prove that $\{f_k(y_k)\}_{k=1}^\infty$ is bounded in $L^2(Q)$.
To this end, we test \eqref{E2.3} with $f_k(y_k)$ and integrate in $Q$
\begin{align}
\int_0^T\langle\frac{\partial y_k}{\partial t},f_k(y_k)\rangle\dt & - \Lambda_fC\|y_k\|^2_{L^2(0,T;H_0^1(\Omega))} + \|f_k(y_k)\|^2_{L^2(Q)}\notag\\
& \le \frac{1}{2}\|u\|^2_{L^2(Q)} + \frac{1}{2}\|f_k(y_k)\|^2_{L^2(Q)},\label{E2.5}
\end{align}
where $\langle\cdot,\cdot\rangle$ denotes the duality between $H^{-1}(\Omega)$ and $H_0^1(\Omega)$.
We define the function $F_k(\rho) = \int_0^\rho f_k(s)\ds$ for \tcr{ $\rho \in \mathbb{R}$.}
Then, we have
\[
\int_0^T\langle\frac{\partial y_k}{\partial t},f_k(y_k)\rangle\dt  = \int_0^T\frac{\mathrm d}{\dt}\int_\Omega F_k(y_k)\dx\dt = \int_\Omega F_k(y_k(T))\dx - \int_\Omega F_k(y_0)\dx.
\]
By the mean value theorem we get
\tcr{
a function $\theta: \mathbb{R} \longrightarrow [0,1]$ such that for
}
$\rho > 0$
\[
\tcr{F_k(\rho)  = \int_0^\rho f_k(s)\ds = \int_0^\rho f'\big(\theta(s)P_k(s)\big)P_k(s)\ds \ge -\Lambda_f\int_0^\rho s\ds =  -\Lambda_f\frac{\rho^2}{2}.}
\]
\tcr{
We establish the same inequality for $\rho < 0$:
\[
F_k(\rho)  = \int_0^\rho f_k(s)\ds = - \int_\rho^0 f'\big(\theta(s)P_k(s)\big)P_k(s)\ds \ge \Lambda_f\int_\rho^0 s\ds =  -\Lambda_f\frac{\rho^2}{2}.
\]
}
Moreover, since $k \ge \|y_0\|_{L^\infty(Q)}$ we have
\begin{align*}
|F_k(y_0(x))| &\le \left|\int_0^{y_0(x)}f(s)\ds\right| \le \|y_0\|_{L^\infty(Q)}\max\{|f(s)| : |s| \le \|y_0\|_{L^\infty(Q)}\}\\
& = C_{f,y_0}\|y_0\|_{L^\infty(Q)}.
\end{align*}
From the last two estimates we infer
\[
\int_0^T\langle\frac{\partial y_k}{\partial t},f_k(y_k)\rangle\dt \ge -\frac{\Lambda_f}{2}
\color{black}
\|y_k(T)\|^2_{L^2(\Omega)} - C_{f,y_0}\|y_0\|_{L^\infty(\Omega)}.
\]
Using this fact in \eqref{E2.5} we obtain with \eqref{E2.4}
\[
\|f_k(y_k)\|_{\tcr{L^2(Q)}}\le C_2\Big(\|u\|_{L^2(Q)} + \|y_0\|_{L^\infty(\Omega)}\Big).
\]
Hence, from \eqref{E2.3}, \eqref{E2.4}, and this estimate we deduce that $\{y_k\}_{k=1}^\infty$ is bounded in $W(0,T)$.
Therefore, we can take a subsequence, denoted in the same way, such that $y_k \rightharpoonup y$ in $W(0,T)$, $y_k(x,t) \to y(x,t)$ for almost all $(x,t) \in Q$, and $f_k(y_k) \rightharpoonup f(y)$ in $L^2(Q)$.
Then, we can pass to the limit in \eqref{E2.3} and deduce that $y = y_u$ is a solution of \eqref{E1.1}.
Moreover, \eqref{E2.2} follows from the estimates established for $y_k$.
The uniqueness is obtained in the standard way.
Indeed, if $y_1$ and $y_2$ are two solutions of \eqref{E1.1} such that $f(y_i) \in L^2(Q)$ for $i = 1, 2$, then we test \eqref{E1.1} with $\e^{-2\Lambda_f t}(y_2 - y_1)$ and, arguing as above, we deduce that $y_2 - y_1 = 0$.
Finally, the convergence property stated in the theorem follows easily from the estimate \eqref{E2.2}.
\end{proof}

Let us remark that the crucial part of the proof was to establish
the uniform boundedness of $\{f_k(y_k)\}_{k=1}^\infty$ in $L^2(Q)$, which was used to
establish the boundedness of $\{y_k\}_{k=1}^\infty$ in $W(0,T)$.
Here, the assumptions {\bfseries (A2)} on $f$ were essential.

Now, we prove some extra $L^p(Q)$ regularity of the solution $y_u$.
First we state the following lemma.

\begin{lemma} \label{lem_embed_w0t} \label{L2.3}
The following properties are satisfied:

\begin{enumerate}
\renewcommand{\theenumi}{\Roman{enumi}}
 \item\label{lem_23_I}
 The space $L^2(0,T;H^1_0(\Omega))\cap L^\infty(0,T;L^2(\Omega))$ is continuously embedded in $L^p(Q)$ with $p=\frac{2(n+2)}n$.

 \item\label{lem_23_II}
If $u\in L^2(Q)$ and $y_0\in H^1_0(\Omega) \cap L^\infty(\Omega)$, then $y_u \in H^1(Q)$ holds.

 \item\label{lem_23_III}
 Let \tcr{$\frac{1}{\sigma} + \frac{n}{2\gamma} < 1$ with $\sigma, \gamma \in [2, \infty]$}, and $y_0\in L^\infty(\Omega)$ be given. Then
 there exists a constant $C$ independent of $y_0$ such that for all
\tcr{$u\in L^\sigma(0,T;L^\gamma(\Omega))$} it holds
\begin{equation}
\|y_u\|_{L^\infty(Q)} \le C\big(\|u\|_{\tcr{L^\sigma(0,T;L^\gamma(\Omega))}} + \|y_0\|_{L^\infty(\Omega)}\big).
\label{E2.6}
\end{equation}
\end{enumerate}
\end{lemma}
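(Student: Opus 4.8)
The three statements rely on fairly different tools, so the plan is to treat them in turn, with part \ref{lem_23_I} serving as the integrability engine for part \ref{lem_23_III}. For part \ref{lem_23_I} I would apply the Gagliardo--Nirenberg inequality slicewise in time: for a.e.\ $t$, interpolating $L^p(\Omega)$ between $L^2(\Omega)$ and $H^1_0(\Omega)$ gives $\|y(t)\|_{L^p(\Omega)} \le C\|\nabla y(t)\|_{L^2(\Omega)}^{\theta}\|y(t)\|_{L^2(\Omega)}^{1-\theta}$ with $\theta = n(\frac12 - \frac1p)$. For $p = \frac{2(n+2)}{n}$ one computes $\theta = \frac{n}{n+2}$, hence $\theta p = 2$ and $(1-\theta)p = \frac4n$, so raising to the power $p$ and integrating in $t$ yields $\|y\|_{L^p(Q)}^p \le C\|y\|_{L^\infty(0,T;L^2(\Omega))}^{4/n}\,\|y\|_{L^2(0,T;H^1_0(\Omega))}^2$, which is the asserted embedding (for $n=2$ this is Ladyzhenskaya's inequality).

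For part \ref{lem_23_II} I would exploit Theorem \ref{T2.1}. Since $f(y_u)\in L^2(Q)$, the function $g:=u-f(y_u)$ lies in $L^2(Q)$ and $y_u$ solves the \emph{linear} problem $\partial_t y + Ay = g$ with $y(0)=y_0\in H^1_0(\Omega)$. Because the coefficients $a_{ij},a_0$ depend on $x$ only, testing (formally on a Galerkin approximation, then passing to the limit) with $\partial_t y$ and integrating over $\Omega$ produces $\|\partial_t y(t)\|_{L^2(\Omega)}^2 + \frac12\frac{d}{dt}\big[a(y,y)+\int_\Omega a_0 y^2\dx\big]$ on the left and, via Young's inequality, $\tfrac12\|g(t)\|_{L^2(\Omega)}^2+\tfrac12\|\partial_t y(t)\|_{L^2(\Omega)}^2$ on the right. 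Integrating in time, coercivity of $A$ makes the terminal term nonnegative while $y_0\in H^1_0(\Omega)$ keeps the initial term finite, so $\partial_t y\in L^2(Q)$; together with $\nabla y\in L^2(Q)$ from $y_u\in W(0,T)$ this gives $y_u\in H^1(Q)$.

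Part \ref{lem_23_III} is the heart of the lemma and the main obstacle, because the only bound on $f(y_u)$ is the $L^2(Q)$ estimate of Theorem \ref{T2.1}, which is too weak to be carried as a source term in an $L^\infty$ bound when $n\ge2$. I would therefore first remove the unfavorable sign of $f$ by the substitution $y=\e^{\Lambda_f t}\eta$, which turns \eqref{E1.1} into $\partial_t\eta + A\eta + g(t,\eta) = \e^{-\Lambda_f t}u$ with $g(t,\eta):=\Lambda_f\eta + \e^{-\Lambda_f t}f(\e^{\Lambda_f t}\eta)$; by {\bfseries (A2)} one has $\partial_\eta g = \Lambda_f + f'(\e^{\Lambda_f t}\eta)\ge0$, so $g(t,\cdot)$ is nondecreasing with $g(t,0)=0$. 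It then suffices to bound $\eta$, since $\|y\|_{L^\infty(Q)}\le \e^{\Lambda_f T}\|\eta\|_{L^\infty(Q)}$ and $\|\e^{-\Lambda_f t}u\|_{L^\sigma(0,T;L^\gamma(\Omega))}\le\|u\|_{L^\sigma(0,T;L^\gamma(\Omega))}$.

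For the upper bound I would run a De Giorgi--Stampacchia truncation: for $k\ge\|y_0\|_{L^\infty(\Omega)}$, test with $w:=(\eta-k)^+$ (whose initial trace vanishes). On $\{\eta>k\ge0\}$ the monotonicity $g(t,\eta)\ge0$ and $a_0\ge0$ let me discard both the nonlinear and the zeroth-order terms, leaving the purely linear inequality $\frac12\frac{d}{dt}\|w\|_{L^2(\Omega)}^2 + \Lambda_A\|\nabla w\|_{L^2(\Omega)}^2 \le \int_\Omega \e^{-\Lambda_f t}u\,w\dx$. Controlling the truncation energy $\sup_t\|w(t)\|_{L^2(\Omega)}^2+\|\nabla w\|_{L^2(Q)}^2$ and feeding it into the embedding of part \ref{lem_23_I} supplies the super-linear gain in integrability that drives the recursion on the levels $k_j=\|y_0\|_{L^\infty(\Omega)}+M(1-2^{-j})$ down to $\|\eta\|_{L^\infty(Q)}\le \|y_0\|_{L^\infty(\Omega)}+CM$ with $M\sim\|u\|_{L^\sigma(0,T;L^\gamma(\Omega))}$; here the subcriticality condition $\frac1\sigma+\frac{n}{2\gamma}<1$ is exactly what forces the iteration exponent to stay larger than one, so that the iterates vanish. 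The lower bound follows by applying the same argument to $-\eta$. Equivalently, once the nonlinearity has been discarded one may simply invoke the classical maximum-norm estimate for linear parabolic equations with right-hand side in $L^\sigma(0,T;L^\gamma(\Omega))$ under precisely this condition.
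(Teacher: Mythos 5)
Your proposal is correct and follows essentially the same route as the paper: slicewise Gagliardo--Nirenberg for \ref{lem_23_I}, moving $f(y_u)\in L^2(Q)$ to the right-hand side and invoking linear parabolic regularity for \ref{lem_23_II}, and for \ref{lem_23_III} the same exponential substitution that makes the nonlinearity monotone with vanishing value at zero, followed by truncation above $\|y_0\|_{L^\infty(\Omega)}$ and a De Giorgi/LSU-type level-set iteration (the paper cites \cite[Theorem III.7.1]{Lad-Sol-Ura68} for exactly this step). The only cosmetic differences are that the paper normalizes the data by $\beta=\|u\|_{L^\sigma(0,T;L^\gamma(\Omega))}+\|y_0\|_{L^\infty(\Omega)}$ and truncates $\phi_\beta$ at both signs at once via $\phi_\beta-P_k(\phi_\beta)$, whereas you track the data dependence through the iteration levels and treat $\eta$ and $-\eta$ separately.
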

\begin{proof}
\ref{lem_23_I} -
It is enough to apply the Gagliardo-Nirenberg inequality, see, e.g., \cite[\tcr{p. 125}]{Nirenberg1959}, with $p=\frac{2(n+2)}n$, $a = \frac{2}p$, $r = q =2$, and $m = 0$ to get
\[
\|y\|_{L^p(\Omega)}\le C_1 \|\nabla y\|_{L^2(\Omega)}^{\frac{2}p}\|y\|_{L^2(\Omega)}^{1-\frac{2}p}.
\]
Integrating this inequality on $(0,T)$ implies the claim.

\ref{lem_23_II} -
Since $f(y_u) \in L^2(Q)$ by Theorem \ref{T2.1}, the $H^1(Q)$ regularity follows from the classical results for linear parabolic equations; see, for instance, \cite[Section III.2]{Showalter1997}.

\ref{lem_23_III} -
{\color{black} By the change of variables $\phi=\e^{-\Lambda_ft}y_u$, equation \eqref{E1.1} is transformed in
\[
\left\{\begin{array}{l}\displaystyle\frac{\partial\phi}{\partial t} + A\phi + \hat{f}(t,\phi) =  \e^{-\Lambda_ft}u \ \mbox{ in } Q = \Omega \times (0,T),\\\phi  =  0\ \mbox{ on } \Sigma = \Gamma \times (0,T),\ \ \phi(x,0) = y_0(x)\ \text{ in } \Omega,\end{array}\right.
\]
where $\hat{f}:[0,T] \times \mathbb{R} \longrightarrow \mathbb{R}$ is given by $\hat{f}(t,s) = \Lambda_f s + \e^{-\Lambda_ft}f(\e^{\Lambda_ft}s)$. We note that \eqref{E2.1} implies $\frac{\partial\hat{f}}{\partial s}(t,s) = \Lambda_f + f'(\e^{\Lambda_ft}s) \ge 0$ and $\hat{f}(t,0) = 0$.

We set $\beta = \|u\|_{L^\sigma(0,T;L^\gamma(\Omega))} + \|y_0\|_{L^\infty(\Omega)}$. We assume that $\beta > 0$, otherwise $\phi = 0$ and \eqref{E2.6} holds. We also set $\phi_\beta = \frac{1}{\beta}\phi$, $u_\beta = \frac{1}{\beta}u$, and $y_{0\beta} = \frac{1}{\beta}y_0$. Then, $\phi_\beta$ satisfies the equation
\[
\left\{\begin{array}{l}\displaystyle\frac{\partial\phi_\beta}{\partial t} + A\phi_\beta + \frac{1}{\beta}\hat{f}(t,\phi) =  \e^{-\Lambda_ft}u_\beta \ \mbox{ in } Q = \Omega \times (0,T),\\\phi_\beta  =  0\ \mbox{ on } \Sigma = \Gamma \times (0,T),\ \ \phi_\beta(x,0) = y_{0\beta}(x)\ \text{ in } \Omega.\end{array}\right.
\]
Let $k \ge 1$ be given. Define $\phi_{\beta,k} = \phi_\beta - P_k(\phi_\beta)$. Testing the above equation with $\phi_{\beta,k}$, integrating in $(0,t)$ with $t \in (0,T)$, and using that $\frac{\partial\phi_\beta}{\partial t}\phi_{\beta,k} =  \frac{\partial\phi_{\beta,k}}{\partial t}\phi_{\beta,k}$, $\nabla\phi_\beta\cdot\nabla\phi_{\beta,k} = |\nabla\phi_{\beta,k}|^2$, and $\hat{f}(t,\phi(x,t))\phi_{\beta,k}(x,t) \ge 0$, we infer
\[
\frac{1}{2}\|\phi_{\beta,k}(t)\|_{L^2(\Omega)}^2 + \Lambda_A\|\phi_{\beta,k}\|^2_{L^2(0,t;H_0^1(\Omega))} \le \int_0^t\int_\Omega \e^{-\Lambda_fs}u_\beta \phi_{\beta,k}\dx\ds.
\]
The proof now follows the lines of the one of \cite[Theorem III.7.1]{Lad-Sol-Ura68} to deduce the existence of a constant $C > 0$ independent of $(u,y_0)$ such that $\|\phi_\beta\|_{L^\infty(Q)} \le C$. Therefore, we have
\[
\|\phi\|_{L^\infty(Q)} = \beta\|\phi_\beta\|_{L^\infty(Q)} \le C(\|u\|_{L^\sigma(0,T;L^\gamma(\Omega))} + \|y_0\|_{L^\infty(\Omega)}),
\]
which implies \eqref{E2.6}.}
\end{proof}

\begin{theorem}
	\label{thm_stateeq_lp_estimates}
	Let $u\in L^r(Q)$ and $y_0\in L^\infty(\Omega)$ be given such with $r\in [2, 1+ \frac n2]$.
Then the solution $y_u$ of \eqref{E1.1} belongs to $L^q(Q)$, where $q$ has to be chosen as follows:
\begin{enumerate}
	\item if $r< 1+ \frac n2$ then
\begin{equation}\label{eq_cond_pq}
	q =r\frac{n+2}{n+2-2r} \ge r\frac{n+2}{n},
\end{equation}
	\item if $r = 1 + \frac n2$ then  $q< + \infty$ is arbitrary.
\end{enumerate}
	In particular,
	there exists $C = C(q,r)>0$ independent of $u$ and $y_0$ such that
\begin{equation}
	\|y_u\|_{L^q(Q)} \le C( \|u\|_{L^r(Q)} + \|y_0\|_{L^\infty(\Omega)}).
\label{E2.8}
\end{equation}
\end{theorem}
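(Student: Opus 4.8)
The plan is to reduce the nonlinear problem to a family of linear energy estimates and then iterate the Gagliardo--Nirenberg embedding of Lemma \ref{L2.3}(\ref{lem_23_I}). As in the proof of Lemma \ref{L2.3}(\ref{lem_23_III}), I would first pass to the variable $\phi = \e^{-\Lambda_f t} y_u$, which solves $\partial_t \phi + A\phi + \hat f(t,\phi) = g$ with $g = \e^{-\Lambda_f t} u$, where $\hat f(t,\cdot)$ is nondecreasing and $\hat f(t,0)=0$. Since $\|g\|_{L^r(Q)} \le \|u\|_{L^r(Q)}$ and $\|y_u\|_{L^q(Q)} \le \e^{\Lambda_f T}\|\phi\|_{L^q(Q)}$, it suffices to bound $\|\phi\|_{L^q(Q)}$ in terms of $\|u\|_{L^r(Q)} + \|y_0\|_{L^\infty(\Omega)}$. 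The monotonicity of $\hat f$ is the structural ingredient that makes this possible: for any $\mu \ge 2$ one has $\hat f(t,\phi)\,|\phi|^{\mu-2}\phi \ge 0$, so this zeroth-order term can simply be discarded whenever we test with $|\phi|^{\mu-2}\phi$.

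For the iteration itself, I would test the $\phi$-equation with $|\phi|^{\mu-2}\phi$, justified rigorously on the truncations $P_M(\phi)$ and on the regularized approximations $y_k$ from the proof of Theorem \ref{T2.1}, before passing to the limit. Using ellipticity, the identity $\int_\Omega \partial_t\phi\,|\phi|^{\mu-2}\phi\dx = \tfrac1\mu \tfrac{\mathrm d}{\mathrm dt}\|\phi\|_{L^\mu(\Omega)}^\mu$, and the sign of $\hat f$, this controls $\sup_t \|\phi(t)\|_{L^\mu(\Omega)}^\mu + \|\nabla |\phi|^{\mu/2}\|_{L^2(Q)}^2$ by $\|y_0\|_{L^\mu(\Omega)}^\mu$ together with the right-hand side contribution $\|g\|_{L^r(Q)}\,\|\phi\|_{L^{(\mu-1)r'}(Q)}^{\mu-1}$, where $r' = r/(r-1)$. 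Applying Lemma \ref{L2.3}(\ref{lem_23_I}) to $w = |\phi|^{\mu/2}$ then converts the left-hand side into control of $\|\phi\|_{L^{\mu(n+2)/n}(Q)}$. Hence, if $\phi \in L^{s}(Q)$ is already known, choosing $\mu = 1 + s/r'$ produces $\phi \in L^{s'}(Q)$ with $s' = \tfrac{n+2}{n}\big(1 + s/r'\big)$. Starting from $\phi \in L^{2(n+2)/n}(Q)$, which follows from Theorem \ref{T2.1} (so that $\phi \in L^2(0,T;H_0^1(\Omega))\cap L^\infty(0,T;L^2(\Omega))$) and Lemma \ref{L2.3}(\ref{lem_23_I}), one iterates this affine map.

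The bookkeeping of exponents is where the two cases separate, and where the main difficulty lies. The map $s \mapsto \tfrac{n+2}{n}(1 + s/r')$ has slope $a = \tfrac{(n+2)(r-1)}{nr}$ and fixed point exactly $q = \tfrac{r(n+2)}{n+2-2r}$; one checks that $a < 1 \iff r < 1 + \tfrac n2$ and $a = 1 \iff r = 1 + \tfrac n2$. When $r = 1 + \tfrac n2$ the map is a translation by $\tfrac{n+2}{n}$, so finitely many steps reach any prescribed finite $q$, as claimed. When $r < 1 + \tfrac n2$ the iterates increase monotonically to the fixed point, giving $\phi \in L^{q'}(Q)$ for every $q' < q$, but a single energy step cannot output its own fixed point, so reaching the exact borderline exponent $q$ is the obstacle. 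I would close this last gap by a sharp linear estimate: using the sign of $\hat f$ and a comparison (maximum principle) argument, dominate $|\phi|$ by the solution $\Phi$ of the linear problem $\partial_t \Phi + A\Phi = |g|$, $\Phi(0) = |y_0|$, and invoke the endpoint $L^r(Q) \to L^q(Q)$ smoothing estimate for the linear parabolic operator — the decay $\|\e^{-tA}\|_{L^a(\Omega)\to L^b(\Omega)} \lesssim t^{-\frac n2(\frac1a-\frac1b)}$ combined with the Hardy--Littlewood--Sobolev inequality in the time variable — which remains valid up to and including $q$ because $1 < r < q < \infty$; see \cite{Lad-Sol-Ura68}. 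The estimate \eqref{E2.8} with a constant $C = C(q,r)$ then follows, the dependence on the initial datum entering only through $\|y_0\|_{L^\infty(\Omega)}$.
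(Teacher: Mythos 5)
Your proof is correct in substance, but it takes a genuinely different route at the decisive step, and the place where you see an obstacle is exactly where the paper's one key idea lives. The shared ground: both arguments test with powers $|y|^{p-2}y$, apply Lemma \ref{lem_embed_w0t}-\ref{lem_23_I} to $|y|^{p/2}$, and justify the computation on regular/truncated solutions before passing to the limit. The paper, however, never iterates: it fixes $p=\frac{rn}{n+2-2r}$ once and for all, for which the H\"older-dual exponent of the source term satisfies $(p-1)r' = p\,\frac{n+2}{n}=q$ exactly (condition \eqref{eq_cond_pr} holds with equality), so that
\[
\|y\|_{L^q(Q)}^p \le C\Big(\int_Q |u|\,|y|^{p-1}\dx\dt + \|y_0\|^p_{L^p(\Omega)}\Big)
\le C\,\|u\|_{L^r(Q)}\|y\|_{L^q(Q)}^{p-1} + C\,\|y_0\|^p_{L^p(\Omega)}
\le \tfrac12\|y\|_{L^q(Q)}^p + C'\big(\|u\|^p_{L^r(Q)} + \|y_0\|^p_{L^p(\Omega)}\big),
\]
and the first term on the right is absorbed into the left; the absorption is legitimate because in Step 1 of the paper's proof $y\in L^\infty(Q)$, so $\|y\|_{L^q(Q)}$ is a priori finite, and the general case follows by approximation. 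So your assertion that ``a single energy step cannot output its own fixed point'' is mistaken: it can, by Young absorption, precisely when the test exponent is chosen so that $(p-1)r'$ hits the fixed point --- this is the idea you missed, and it makes both your iteration and your endpoint machinery unnecessary. Your substitute for the endpoint --- the monotone comparison $|\phi|\le\Phi$ with the linear problem, followed by the $L^r(Q)\to L^q(Q)$ smoothing estimate obtained from semigroup decay plus Hardy--Littlewood--Sobolev in time --- is sound (the exponents check out: $\lambda=\frac{n}{n+2}\in(0,1)$ and $1<r<q<\infty$, so HLS is not itself at an endpoint), but it rests on Gaussian heat-kernel bounds for divergence-form operators with merely measurable coefficients, which is machinery external to the paper and not contained in \cite{Lad-Sol-Ura68} in the form you invoke; moreover, once you have it, your entire iteration becomes redundant, since comparison plus the linear endpoint estimate alone already prove the theorem, including the critical case $r=1+\frac n2$. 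In short: your argument closes, but the paper's single absorption step is more elementary and self-contained, while your route buys the structural insight that only the sign/monotonicity of the nonlinearity and linear theory are needed, at the price of heavier analytic input.
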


\begin{proof}
For $r< 1+ \frac n2$ we set $p =\frac{rn}{n+2-2r}$.
Due to the assumptions on $r$, it follows {that} $n\ge 3$ and $p\ge r\frac n{n-2} \ge r \ge 2$.
In the critical case $r = 1+ \frac n2$, we can choose $p\ge 2$ arbitrarily.
Then in both cases, we have $p\ge 2$ and $p$ satisfies
\begin{equation}\label{eq_cond_pr}
	\frac 1r + \frac{p-1}p \frac{n}{n+2} \le 1.
\end{equation}
In addition, \eqref{eq_cond_pq} yields $q =p\frac{n+2}n$.

Throughout the proof we abbreviate $y:=y_u$.

{\em 1.
Estimates for regular $y$.}
Let us assume for the moment that $y_t \in L^2(Q)$ and $y\in L^\infty(Q)$.
\tcr{
Then, we have that $|y|^{p-2}y \in H^1(Q) \cap L^\infty(Q)$.
}
Note that
\begin{multline*}
\int_\Omega \sum_{i,j = 1}^na_{ij}\partial_{x_i}y\partial_{x_j}(|y|^{p-2}y) \dx
  = \int_\Omega (p-1)\sum_{i,j = 1}^na_{ij}\partial_{x_i}y\partial_{x_j}y \cdot |y|^{p-2}  \dx \\
  \begin{aligned}
&=\int_\Omega \frac{4(p-1)}{p^2} \sum_{i,j = 1}^na_{ij}\partial_{x_i}(|y|^{p/2})\partial_{x_j}(|y|^{p/2}) \dx\\
& \ge \frac{4(p-1)}{p^2}\Lambda_A \int_\Omega |\nabla (|y|^{\frac p2})|^2 \dx.
\end{aligned}
\end{multline*}
Taking $|y|^{p-2}y$ as test function in the weak formulation of \eqref{E1.1},
\tcr{
integrating on $(0,t)\times \Omega$, and using the above inequality,
}
results in
	\begin{multline*}
		\frac1p ( \|y(t)\|_{L^p(\Omega)}^p - \|y_0\|_{L^p(\Omega)}^p)
		+  \frac{4(p-1)}{p^2}\Lambda_A\int_0^t\int_\Omega |\nabla (|y|^{\frac p2})|^2 \dx\ds\\
		+ \int_0^t \int_\Omega f(y)y|y|^{p-2}\dx\ds  \le \int_0^t \int_\Omega uy|y|^{p-2}\dx\ds.
	\end{multline*}
Since $f(y)y \ge -\Lambda_f y^2$, we obtain
\begin{multline*}
    \frac1p  \|y(t)\|_{L^p(\Omega)}^p
		+ \frac{4(p-1)}{p^2}\Lambda_A\int_0^t\int_\Omega |\nabla (|y|^{\frac p2})|^2 \dx\ds
		\\
		\le \int_Q |u|\cdot |y|^{p-1}\dx\ds + \frac1p \|y_0\|_{L^p(\Omega)}^p
		+\Lambda_f\int_0^t\|y(t)\|_{L^p(\Omega)}^p \ds.
\end{multline*}
By Gronwall inequality, we obtain
\[
	\|y\|_{L^\infty(0,T;L^p(\Omega))}^p  + \|\nabla (|y|^{\frac p2})\|_{L^2(Q)}^2 \le C_1\left( \int_Q |u|\cdot |y|^{p-1}\dx\ds + \|y_0\|_{L^p(\Omega)}^p \right),
\]
which is an estimate of $|y|^{\frac p2}$ in $L^\infty(0,T;L^2(\Omega)) \cap L^2(0,T;H^1(\Omega))$.
Using Lemma \ref{lem_embed_w0t}-\ref{lem_23_I}, this space embeds continuously  into $L^{\frac{2(n+2)}n}(Q)$, which implies $y\in L^{p\frac{n+2}n}(Q)=L^q(Q)$ together with the corresponding estimate
\[
	\|y\|_{L^{q}(Q)}^p \le C_2\left( \int_Q |u|\cdot |y|^{p-1}\dx\ds + \|y_0\|_{L^p(\Omega)}^p \right).
\]
Due to the property \eqref{eq_cond_pr},
we can apply H\"older and Young inequalities, and we get
\[
	\|y\|_{L^{q}(Q)}^p \le C_3( \|u\|_{L^r(Q)}^p + \|y_0\|_{L^p(\Omega)}^p),
\]
which is the claim.
In the critical case $r=1+\frac n2$, we can chose $p$ and thus $q$ arbitrarily large.
In any case, the constant $C$ in the inequality \eqref{E2.8} depends on $p$ and $q$.

{\em 2.
General case.} Given $u \in L^r(Q)$ we set $u_k = P_k(u)$.
For $y_0 \in L^\infty(\Omega)$, we take a sequence $\{\hat y{_{0k}}\}_{k = 1}^\infty \subset H_0^1(\Omega)$ such that $\hat y_{0k}(x) \to y_0(x)$ for almost every $x \in \Omega$.
Now, we define $y_{0k} = \tcr{P_{M_0}}(\hat y_k)$ with $M_0 = \|y_0\|_{L^\infty(\Omega)}$.
We still have that $\{y_{0k}\}_{k = 1}^\infty \subset H_0^1(\Omega)$ and $\|y_{0k}\|_{L^\infty(\Omega)} \le \|y_0\|_{L^\infty(\Omega)}$.
Then, the solution $y_k$ of \eqref{E1.1} associated with $(u_k,y_{0k})$ is an element of $H^1(Q) \cap L^\infty(Q)$; see Lemma \ref{lem_embed_w0t}.
From Theorem \ref{T2.1} we infer that $y_k \rightharpoonup y$ in $W(0,T)$.
Moreover, every function $y_k$ satisfies the inequality \eqref{E2.8} with $y_k$ in the left hand side and $u_k$ and $y_{0k}$ on the right.
Now, it is easy to pass to the limit in this inequality and to deduce that $y$ satisfies \eqref{E2.8} as well.
\end{proof}

The reader is referred to \cite{Troltzsch2000} for other $L^p$ estimates in the case of linear equations, which were proven using semigroup theory.

\begin{remark}
	The regularity $y_0 \in L^\infty(\Omega)$ was used in the proof to be able to perform the approximation procedure in the second part,
	as the existence of $L^\infty(Q)$ solutions for the nonlinear equation requires this regularity of $y_0$.
	The estimates themselves only used $L^p$-norms of $y_0$, $p<\infty$.
	\label{R2.2}
\end{remark}

\subsection{Example}
\label{S2.2}
Let us show by means of a small counterexample that the solution of \eqref{E1.1} is not necessarily an element of $L^\infty(Q)$ if the control $u$ is just an element of $L^2(Q)$.
Actually, we prove something more general: for $n \ge 2$ and smooth domain $\Omega$ the space $L^2(0,T;H^2(\Omega) \cap H_0^1(\Omega)) \cap H^1(0,T;L^2(\Omega))$ is not contained in $L^\infty(Q)$.

For $r,s>0$, let $Q_{r,s}:= B_r(0) \times [1-s,1+s]\subset \R^{n+1}$, where $B_r(0)$ is the open ball of radius $r$.
Let us choose $\phi\in C_c^\infty(\R^{n+1})$ such that $0 \le\phi(x,t) \le 1$, $\phi=1$ on $Q_{1,1}$, and $\phi=0$ on $\R^{n+1}\setminus Q_{2,2}$.
We set $Q = \Omega \times (0,T) = B_2(0) \times (0,2)$ and define the function $y$ in $Q$ by
\[
	y(x,t) := \sum_{k=1}^\infty k^{-1} \phi ( 2^kx, 2^{2k}(t-1)).
\]
Note that for $(x,t)\ne (0,1)$ only finitely many summands are non-zero.
The derivatives of $(x,t) \mapsto \phi ( 2^kx, 2^{2k}(t-1))$ are supported on
$Q_{2^{1-k},2^{1-2k}} \setminus Q_{2^{-k},2^{-2k}}$, hence the supports of the derivatives of the terms in the sum are disjoint.
Due to this fact, and using the coordinate transform $(\hat x,\hat t) = (2^kx, 2^{2k}(t - 1))$, we deduce
\[
	\|\partial_t y\|_{L^2(Q)}^2 = \|\partial_t y\|_{L^2(\mathbb R^{n + 1})}^2
	= \sum_{k=1}^\infty k^{-2} 2^{k(2-n)}	\|\partial_t \phi\|_{L^2(\mathbb R^{n + 1})}^2 < +\infty
\]
and similarly
\[
	\|\partial_{x_i}\partial_{x_j}y\|_{L^2(Q)}^2 =\|\partial_{x_i}\partial_{x_j}y\|_{L^2(\mathbb R^{n + 1})}^2 = \sum_{k=1}^\infty k^{-2} 2^{k(2-n)}\|\partial_{x_i}\partial_{x_j}\phi\|_{L^2(\mathbb R^{n + 1})}^2 < +\infty.
\]
Since $y$ vanishes in $Q \setminus Q_{1,1}$, it follows $y\in L^2(0,T;H^2(\Omega) \cap H_0^1(\Omega)) \cap H^1(0,T;L^2(\Omega))$ and $y(x,0) = 0$.
For $m\in \mathbb N$, let $(x,t)\in Q_{2^{-m},2^{-2m}}$.
Then
\[
	y(x,t) \ge \sum_{k=1}^m k^{-1} \phi ( 2^kx, 2^{2k}(t-1)) =\sum_{k=1}^m k^{-1}.
\]
Clearly, $ Q_{2^{-m},2^{-2m}}$ has positive measure, and $y\not\in L^\infty(Q)$.
Now, setting $u = \frac{\partial y}{\partial t} - \Delta y$, we infer that $y$ is the unique solution of
\[
\left\{\begin{array}{l}\displaystyle\frac{\partial y}{\partial t} - \Delta y =  u \ \mbox{ in } Q,\\y  =  0\ \mbox{ on } \Sigma,\ \ y(x,0) = 0\ \text{ in } \Omega.\end{array}\right.
\]
Moreover, since $L^2(0,T;H^2(\Omega) \cap H_0^1(\Omega)) \cap H^1(0,T;L^2(\Omega)) \subset C([0,T];H_0^1(\Omega))$ and $H_0^1(\Omega) \subset L^6(\Omega)$ if $n \le 3$, for $f(y) = y^3$ we have that
\[
\|f(y)\|_{L^2(Q)} \le C\|y\|^2_{C([0,T];H_0^1(\Omega))}\|y\|_{L^2(0,T;H_0^1(\Omega))} < \infty.
\]
Therefore, $u = \frac{\partial y}{\partial t} - \Delta y + f(y) \in L^2(Q)$ for $n = 2$ or 3, and $y \not\in L^\infty(Q)$ solves the equation \eqref{E1.1}.

\subsection{Existence of solutions in $L^2(Q)$}
\label{S2.3}
In this section, we prove the existence of at least one solution to \Pb.
Below we will prove that any local solution of \Pb belongs to $L^\infty(Q)$.
Here, local solutions are intended in the sense of $L^2(Q)$.
Let us start proving the existence of optimal controls in $L^2(Q)$.
The proof is standard, and we only give a brief sketch.

\begin{theorem}\label{thm_exist_sol_P}
	Problem \Pb admits a global solution.
\end{theorem}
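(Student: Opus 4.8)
The plan is to use the direct method of the calculus of variations. First I would take a minimizing sequence $\{u_k\}_{k=1}^\infty \subset L^2(Q)$ for \Pb, so that $J(u_k) \to \inf_{u} J(u)$. Since $\alpha > 0$, the term $\frac{\alpha}{2}\|u_k\|_{L^2(Q)}^2$ is bounded along the sequence, which immediately yields that $\{u_k\}$ is bounded in $L^2(Q)$. By reflexivity of $L^2(Q)$, I can extract a weakly convergent subsequence, denoted in the same way, with $u_k \rightharpoonup \bar u$ in $L^2(Q)$ for some $\bar u \in L^2(Q)$.

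The next step is to pass to the limit in the state equation. Here I would invoke the weak continuity statement from Theorem \ref{T2.1}: since $u_k \rightharpoonup \bar u$ in $L^2(Q)$, the corresponding states satisfy $y_{u_k} \rightharpoonup y_{\bar u}$ in $W(0,T)$. The compact embedding of $W(0,T)$ into $L^2(Q)$ (Aubin--Lions) then gives strong convergence $y_{u_k} \to y_{\bar u}$ in $L^2(Q)$, so in particular $\int_Q (y_{u_k} - y_d)^2 \to \int_Q (y_{\bar u} - y_d)^2$. For the control term, weak lower semicontinuity of the norm gives $\|\bar u\|_{L^2(Q)}^2 \le \liminf_k \|u_k\|_{L^2(Q)}^2$. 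Combining these two facts yields
\[
J(\bar u) \le \liminf_{k\to\infty} J(u_k) = \inf_{u \in L^2(Q)} J(u),
\]
and since $\bar u$ is admissible, $\bar u$ is a global minimizer.

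The only point requiring a little care is the convergence of the tracking term. The functional $u \mapsto \frac12\int_Q(y_u - y_d)^2$ is not obviously convex, so I cannot rely on convexity-based lower semicontinuity directly; instead I would argue by genuine strong $L^2(Q)$ convergence of the states, as above, which makes that part of the cost continuous rather than merely lower semicontinuous. The main obstacle, such as it is, lies in justifying the strong convergence $y_{u_k} \to y_{\bar u}$ in $L^2(Q)$; this is precisely where the well-posedness and weak-continuity properties established in Theorem \ref{T2.1}, together with a compactness argument for $W(0,T)$, do the essential work. Given that the paper has already done the heavy lifting in Theorem \ref{T2.1}, the remainder is standard and the authors are right to call it a brief sketch.
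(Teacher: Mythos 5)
Your proposal is correct and follows essentially the same route as the paper: the direct method with a minimizing sequence bounded in $L^2(Q)$ by the Tikhonov term, weak convergence of a subsequence, passage to the limit in the state equation via Theorem \ref{T2.1}, and lower semicontinuity of the cost. Your extra care with the tracking term (strong convergence of the states in $L^2(Q)$ via the compact embedding of $W(0,T)$) is a valid way to make precise the ``weak sequential lower semicontinuity of $J$'' that the paper invokes without detail; alternatively, convexity of $y \mapsto \int_Q (y-y_d)^2\dx\dt$ together with $y_{u_k} \rightharpoonup y_{\bar u}$ in $L^2(Q)$ would suffice.
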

\begin{proof}
	Due to the structure of the cost functional $J$, a minimizing sequence $\{u_k\}_{k = 1}^\infty$ is bounded in $L^2(Q)$,
	and hence we can assume (after passing to a subsequence if necessary) that $u_k \rightharpoonup \bar u$ in $L^2(Q)$.
	Due to Theorem \ref{T2.1}, we can pass to the limit in the state equation.
	Using the weak sequentially lower semicontinuity of the cost functional $J$, we can prove that $\bar u$ is a global solution of \Pb.
\end{proof}

\subsection{Local solutions are in $L^\infty(Q)$}
\label{S2.4}

In order to prove that local solutions of \Pb are in $L^\infty(Q)$, we employ
the following auxiliary problems, which are localized and contains box constraints parametrized by $M$.
Let a local minimizer $\bar u$ of \Pb be given.
Let $\rho>0$ {be} such that $J(\bar u) \le J(u)$ for all $u$ with $\|u-\bar u\|_{L^2(Q)} \le \rho$.
We define the following problem:
\begin{equation}\label{eq_PbM}
	\tag{P${}_M$}
	\min J(u) + \frac12 \|u-\bar u\|_{L^2(Q)}^2
\end{equation}
subject to $\|u-\bar u\|_{L^2(Q)} \le\rho$, $|u(x,t)| \le M$  f.a.a.\@ $(x,t) \in Q$.

Similar to Theorem \ref{thm_exist_sol_P}, we obtain solvability of \PbM.

\begin{lemma}
	Let $\{u_M\}_{M>0}$ be a family of solutions of \PbM.
Then $u_M \to \bar u$ in $L^2(Q)$ for $M\to \infty$.
	\label{L3.1}
\end{lemma}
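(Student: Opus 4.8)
The plan is to compare the solution $u_M$ of \PbM against a truncation of $\bar u$ itself. For $M > 0$ set $\bar u_M := P_M(\bar u)$, so that $|\bar u_M| \le M$ a.e.\@ and, by dominated convergence, $\bar u_M \to \bar u$ in $L^2(Q)$ as $M \to \infty$; in particular $\|\bar u_M - \bar u\|_{L^2(Q)} \le \rho$ for all $M$ large enough, so that $\bar u_M$ is feasible for \PbM. Testing the optimality of $u_M$ with this competitor then yields
\[
J(u_M) + \tfrac12\|u_M - \bar u\|_{L^2(Q)}^2 \le J(\bar u_M) + \tfrac12\|\bar u_M - \bar u\|_{L^2(Q)}^2
\]
for all such $M$.

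Next I would show that the right-hand side converges to $J(\bar u)$. The control part of $J$ converges immediately, since $\bar u_M \to \bar u$ strongly in $L^2(Q)$. For the tracking part I would invoke Theorem \ref{T2.1}: from $\bar u_M \to \bar u$ in $L^2(Q)$ one obtains $y_{\bar u_M} \rightharpoonup y_{\bar u}$ in $W(0,T)$, and since $W(0,T)$ embeds compactly into $L^2(Q)$ by the Aubin--Lions lemma, $y_{\bar u_M} \to y_{\bar u}$ strongly in $L^2(Q)$. Hence $\|y_{\bar u_M} - y_d\|_{L^2(Q)} \to \|y_{\bar u} - y_d\|_{L^2(Q)}$, so that $J(\bar u_M) \to J(\bar u)$, while also $\|\bar u_M - \bar u\|_{L^2(Q)}^2 \to 0$.

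Finally I would exploit local optimality to close the estimate. Each $u_M$ is feasible for \Pb and satisfies $\|u_M - \bar u\|_{L^2(Q)} \le \rho$, whence $J(u_M) \ge J(\bar u)$ by the defining property of $\rho$. Combining this with the inequality above gives
\[
\tfrac12\|u_M - \bar u\|_{L^2(Q)}^2 \le \big(J(\bar u_M) - J(\bar u)\big) + \tfrac12\|\bar u_M - \bar u\|_{L^2(Q)}^2,
\]
and the right-hand side tends to $0$ by the previous step, which proves $u_M \to \bar u$ in $L^2(Q)$. The only genuinely nontrivial point is the convergence $J(\bar u_M) \to J(\bar u)$, i.e.\@ the strong $L^2(Q)$ convergence of the associated states; everything else is bookkeeping with the quadratic penalty and the local-minimality assumption.
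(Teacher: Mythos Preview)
Your argument is correct and shares the same backbone as the paper's proof: both use the truncation $\bar u_M = P_M(\bar u)$ as a competitor in \PbM and the resulting optimality inequality
\[
J(u_M) + \tfrac12\|u_M - \bar u\|_{L^2(Q)}^2 \le J(\bar u_M) + \tfrac12\|\bar u_M - \bar u\|_{L^2(Q)}^2,
\]
together with the continuity $J(\bar u_M) \to J(\bar u)$ (which you justify explicitly via Theorem~\ref{T2.1} and the compact embedding $W(0,T)\hookrightarrow L^2(Q)$; the paper uses the same fact implicitly). The difference is in the endgame: the paper first extracts a weakly convergent subsequence $u_{M_k}\rightharpoonup u^*$, identifies $u^*=\bar u$ via weak lower semicontinuity of $J$ and local optimality, and then runs a $\limsup/\liminf$ chain to upgrade to strong convergence. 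You bypass the subsequence argument entirely by invoking $J(u_M)\ge J(\bar u)$ directly (since $\|u_M-\bar u\|_{L^2(Q)}\le\rho$), which immediately gives the bound $\tfrac12\|u_M-\bar u\|_{L^2(Q)}^2 \le J(\bar u_M)-J(\bar u)+\tfrac12\|\bar u_M-\bar u\|_{L^2(Q)}^2\to 0$. Your route is shorter and avoids any compactness argument on the family $\{u_M\}$; the paper's route is a more standard weak-limit-identification pattern, but ends up needing the same continuity ingredient anyway.
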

\begin{proof}
Let $M_k \to \infty$ and set $u_k:=u_{M_k}$.
We can assume (after passing to a subsequence if necessary) that $u_k \rightharpoonup u^*$ in $L^2(Q)$.
Let us define the truncation $\bar u_k = \tcr{P_{M_k}}(\bar u)$.
Then $\bar u_k \to \bar u$ in $L^2(Q)$.
Hence, $\bar u_k$ is a feasible control for problem \PbMk for $k$ large enough and, consequently $J(u_k) + \frac12 \|u_k-\bar u\|_{L^2(Q)}^2 \le J(\bar u_k)+ \frac12 \|\bar u_k-\bar u\|_{L^2(Q)}^2$.
Due to the weak lower semicontinuity of $J$ on $L^2(Q)$, we can pass to the limit in this inequality to obtain
$J(u^*) + \frac12 \|u^*-\bar u\|_{L^2(Q)}^2 \le J(\bar u)$.
Since $\|u^*-\bar u\|_{L^2(Q)} \le\rho$, it follows $\bar u=u^*$ by
the optimality of $\bar u$ in the ball $B_\rho(\bar u)$.
By the properties of limit inferior and superior, we have
\[\begin{split}
	J(\bar u) &= \lim_{k\to\infty} \left( J(\bar u_k)+ \frac12 \|\bar u_k-\bar u\|_{L^2(Q)}^2 \right)\\
	 &\ge \limsup_{k\to\infty} \left(J(u_k) + \frac12 \|u_k-\bar u\|_{L^2(Q)}^2\right)\\
	 &\ge \liminf_{k\to\infty} J(u_k) + \limsup_{k\to\infty}\frac12 \|u_k-\bar u\|_{L^2(Q)}^2\\
	 & \tcr{\ge J(\bar u) + \limsup_{k\to\infty}\frac12 \|u_k-\bar u\|_{L^2(Q)}^2}\\
	 & \ge J(\bar u) + \liminf_{k\to\infty}\frac12 \|u_k-\bar u\|_{L^2(Q)}^2 \ge J(\bar u).
\end{split}\]
Hence $\|u_k-\bar u\|_{L^2(Q)}^2\to 0$.
Since the limit is independent of the chosen subsequence, the claim follows.
\end{proof}

From this lemma we infer the existence of $M_0$ such that $\|u_M - \bar u\|_{L^2(Q)} < \rho$ for all $M > M_0$.
Hence, $u_M$ is a local minimizer of $J(u) + \frac12 \|u-\bar u\|_{L^2(Q)}^2$ on the set of controls of $u \in L^2(Q)$ such that $|u| \le M$.
Since
the set of feasible controls for \PbM is bounded in $L^\infty(Q)$, then
a classical proof \cite[Chapter 5]{Troltzsch2010} establishes the following optimality conditions for the local minimizers.

\begin{theorem}
Let $u_M$ be a local minimizer of \PbM for $M > M_0$.
Then, there exists $\varphi_M\in H^1(Q) \cap L^\infty(Q)$ satisfying
\begin{align}
&\left\{\begin{array}{l}\displaystyle -\frac{\partial \varphi_M}{\partial t} + A^*\varphi_M + f'(y_M)\varphi_M =  y_M - y_d \ \mbox{ in } Q,\\\varphi_M  =  0\ \mbox{ on } \Sigma,\ \ \varphi_M(x,T) = 0\ \text{ in } \Omega,\end{array}\right.
\label{E3.1}\\
&\label{E3.2}
	\int_Q(\varphi_M + \alpha u_M + u_M-\bar u)(v-u_M)\dx\dt \ge 0 \quad \forall v\in L^2(Q): \ |v| \le M,
\end{align}
where $y_M$ is the state associated with $u_M$ and
\[
A^*\varphi =-\sum_{i,j=1}^{n}\partial_{x_j}(a_{ji}(x)\partial_{x_i}\varphi) + a_0(x)\varphi.
\]
\label{T3.1}
\end{theorem}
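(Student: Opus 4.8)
The plan is to use the box constraint $|u|\le M$ to recover the differentiability that fails for general $L^2(Q)$ controls. Since $M>M_0$, Lemma~\ref{L3.1} gives $\|u_M-\bar u\|_{L^2(Q)}<\rho$, so the ball constraint is inactive and $u_M$ is a local minimizer of $\tilde J(u):=J(u)+\frac12\|u-\bar u\|_{L^2(Q)}^2$ over the convex set $\uad^M:=\{u\in L^2(Q):\ |u|\le M\}$. Every element of $\uad^M$ lies in $L^\infty(Q)$ with norm at most $M$, so by Lemma~\ref{L2.3}-\ref{lem_23_III} the corresponding states are bounded in $L^\infty(Q)$ by a constant depending only on $M$ and $\|y_0\|_{L^\infty(\Omega)}$. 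This uniform $L^\infty$ bound is the decisive point: it confines the states to a fixed compact interval on which $f$ is $C^1$ with bounded derivative, which is exactly what is missing in the $L^2$ framework.

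With this bound in hand, I would show that the control-to-state map $G:u\mapsto y_u$ is of class $C^1$ from a neighbourhood of $u_M$ in $\uad^M$ into $W(0,T)$, the derivative $z=G'(u_M)v$ being the unique solution of the linearized equation
\[
z_t+Az+f'(y_M)z=v,\quad z(0)=0,\quad z|_\Sigma=0;
\]
here $f'(y_M)\in L^\infty(Q)$ makes this a well-posed linear parabolic problem. Consequently $\tilde J$ is $C^1$ near $u_M$ with
\[
\tilde J'(u_M)v=\int_Q(y_M-y_d)\,G'(u_M)v\dx\dt+\int_Q(\alpha u_M+u_M-\bar u)v\dx\dt.
\]

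Next I would introduce the adjoint state $\varphi_M$ as the solution of the backward linear parabolic equation \eqref{E3.1}. Reversing time reduces \eqref{E3.1} to a forward equation with bounded zeroth-order coefficient $f'(y_M)\ge-\Lambda_f$ and right-hand side $y_M-y_d\in L^p(0,T;L^q(\Omega))$ with $\frac1p+\frac n{2q}<1$ (using $y_M\in L^\infty(Q)$); applying the linear $L^\infty$-estimate behind Lemma~\ref{L2.3}-\ref{lem_23_III} gives $\varphi_M\in L^\infty(Q)$, while $y_M-y_d\in L^2(Q)$ and the linear parabolic regularity underlying Lemma~\ref{L2.3}-\ref{lem_23_II} yield $\varphi_M\in H^1(Q)$. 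Testing \eqref{E3.1} with $z$ and the linearized equation with $\varphi_M$, integrating by parts in $(0,T)$ and using $z(0)=0$, $\varphi_M(T)=0$, the time boundary terms cancel and I obtain $\int_Q(y_M-y_d)z\dx\dt=\int_Q\varphi_M v\dx\dt$. Substituting this into the expression for $\tilde J'(u_M)$ and invoking the first-order condition $\tilde J'(u_M)(v-u_M)\ge0$ for all $v\in\uad^M$ delivers exactly \eqref{E3.2}.

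The main obstacle is the differentiability step: it is precisely where the $L^\infty$-bound on the states is indispensable, since without it the superposition operator $y\mapsto f(y)$ need not be differentiable — this is the very difficulty the paper circumvents by passing to the constrained problem \PbM. A secondary technical point is verifying the claimed regularity $\varphi_M\in H^1(Q)\cap L^\infty(Q)$, which rests on both $y_M\in L^\infty(Q)$ and the integrability assumption on $y_d$; once these are settled, the remaining computations are the standard Lagrangian manipulations for box-constrained problems as in \cite[Chapter 5]{Troltzsch2010}.
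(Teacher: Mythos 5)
Your proposal is correct and follows essentially the same route as the paper: the paper proves this theorem precisely by observing that the feasible set of \PbM is bounded in $L^\infty(Q)$, so that the classical differentiability-plus-adjoint argument of \cite[Chapter 5]{Troltzsch2010} applies, and then obtains $\varphi_M\in L^\infty(Q)$ from \cite[Theorem III.7.1]{Lad-Sol-Ura68} (using $y_M\in L^\infty(Q)$ and the integrability assumption on $y_d$) and $\varphi_M\in H^1(Q)$ from classical linear parabolic regularity \cite[Section III.2]{Showalter1997}. You have simply written out in full the standard argument that the paper cites, including the same key observation that the box constraint restores the $C^1$ smoothness of the control-to-state map that is unavailable in the pure $L^2$ setting.
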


From \eqref{E3.1} and due to
\tcr{
$y_d \in L^p(0,T;L^q(\Omega))$ with $p, q \in [2,\infty]$ and $\frac{1}{p} + \frac{n}{2q} < 1$
},
the boundedness of $\varphi_M$ follows from \tcr{\cite[Theorem III.7.1]{Lad-Sol-Ura68}}.
The $H^1(Q)$ regularity is classical; see \cite[Section III.2]{Showalter1997}.

\begin{theorem}\label{thm_adj_bounded}
Let $\bar u$ be a local minimizer of \Pb.
Then $\bar u\in L^\infty(Q)$ holds.
\end{theorem}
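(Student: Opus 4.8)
The plan is to pass to the limit $M\to\infty$ in the optimality system of the auxiliary problems \PbM, extract a limiting first-order condition for $\bar u$, and then bootstrap the integrability of $\bar u$ through the state and adjoint equations. By Lemma \ref{L3.1} the solutions $u_M$ converge to $\bar u$ in $L^2(Q)$, and for $M>M_0$ Theorem \ref{T3.1} supplies adjoint states $\varphi_M$ together with the variational inequality \eqref{E3.2}, which is equivalent to the pointwise projection formula
\[
 u_M = \proj_{[-M,M]}\Big(\tfrac{1}{\alpha+1}(\bar u - \varphi_M)\Big).
\]
First I would collect uniform-in-$M$ energy bounds. Since $\{u_M\}$ is bounded in $L^2(Q)$, Theorem \ref{T2.1} gives a uniform bound on $\{y_M\}$ in $W(0,T)$; testing \eqref{E3.1} with $\e^{-2\Lambda_f(T-t)}\varphi_M$ and using $f'(y_M)\ge-\Lambda_f$ yields a uniform bound on $\{\varphi_M\}$ in $W(0,T)$ controlled by $\|y_M-y_d\|_{L^2(Q)}$. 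By the compact embedding $W(0,T)\hookrightarrow L^2(Q)$ I may pass to a subsequence with $\varphi_M\to\bar\varphi$ strongly in $L^2(Q)$ and $\bar\varphi\in W(0,T)$.

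Next I would pass to the limit in \eqref{E3.2}. For any fixed $v\in L^\infty(Q)$ the choice is admissible once $M\ge\|v\|_{L^\infty(Q)}$, and since $u_M\to\bar u$ and $\varphi_M\to\bar\varphi$ in $L^2(Q)$ the integrand converges strongly, giving $\int_Q(\bar\varphi+\alpha\bar u)(v-\bar u)\dx\dt\ge0$. By density of $L^\infty(Q)$ in $L^2(Q)$ this holds for all $v\in L^2(Q)$, whence
\[
 \alpha\bar u + \bar\varphi = 0 \quad\text{a.e. in } Q.
\]
The decisive feature is that the constraint has disappeared in the limit, so $\bar u$ is now expressed purely through $\bar\varphi$. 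This is what breaks the $L^2$ barrier: the projection formula only gives $|u_M|\le\tfrac{1}{\alpha+1}(|\bar u|+|\varphi_M|)$, so the explicit term $\bar u\in L^2(Q)$ would otherwise cap the integrability of the whole system at $L^2$, and the cancellation $\bar u=-\bar\varphi/\alpha$ removes it.

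The core is then a bootstrap run uniformly in $M$. From $\bar\varphi\in W(0,T)$ and Lemma \ref{lem_embed_w0t}-\ref{lem_23_I} we have $\bar u=-\bar\varphi/\alpha\in L^{q_0}(Q)$ with $q_0=\tfrac{2(n+2)}n$; together with $\sup_M\|\varphi_M\|_{L^{q_0}(Q)}<\infty$ the projection bound then shows $\{u_M\}$ is bounded in $L^{q_0}(Q)$. Theorem \ref{thm_stateeq_lp_estimates} bounds $\{y_M\}$ in a strictly larger $L^q(Q)$, and the analogue of that theorem for the backward adjoint equation—whose proof rests only on $f'(y_M)\ge-\Lambda_f$ and is hence uniform in $M$—bounds $\{\varphi_M\}$ in a still larger space; here the contribution of the fixed datum $y_d$ is already bounded in $L^\infty(Q)$ by its subcriticality $\tfrac1p+\tfrac n{2q}<1$, so the gain is driven solely by $y_M$. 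Passing to the limit transfers this integrability to $\bar\varphi$, hence to $\bar u=-\bar\varphi/\alpha$, and the cycle restarts from a higher exponent. After finitely many cycles $\{u_M\}$ is bounded in some $L^\sigma(0,T;L^\gamma(\Omega))$ with $\tfrac1\sigma+\tfrac n{2\gamma}<1$; then Lemma \ref{lem_embed_w0t}-\ref{lem_23_III} bounds $\{y_M\}$ uniformly in $L^\infty(Q)$, and \cite[Theorem III.7.1]{Lad-Sol-Ura68} applied to \eqref{E3.1} yields $\sup_M\|\varphi_M\|_{L^\infty(Q)}<\infty$, so that $\bar\varphi\in L^\infty(Q)$ and $\bar u=-\bar\varphi/\alpha\in L^\infty(Q)$.

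The main obstacle is to guarantee that every regularity estimate for the adjoint states—both the intermediate integrability gains and the final $L^\infty(Q)$ bound—holds with a constant independent of $M$, equivalently independent of the possibly unbounded coefficient $f'(y_M)$. This is exactly where the one-sided condition \eqref{E2.1} is essential: after the rescaling $\e^{-\Lambda_f(T-t)}\varphi_M$ the zeroth-order coefficient becomes nonnegative and can be discarded in the truncation and energy estimates underlying Theorem \ref{thm_stateeq_lp_estimates} and \cite[Theorem III.7.1]{Lad-Sol-Ura68}, so those estimates are uniform in $M$. A secondary point to verify is that the iteration terminates, i.e. that one state step followed by one adjoint step crosses the threshold $\tfrac1\sigma+\tfrac n{2\gamma}<1$ after finitely many cycles, which follows since each step strictly increases the exponent toward the critical value $1+\tfrac n2$.
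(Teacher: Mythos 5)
Your proposal is correct and follows essentially the same route as the paper's own proof: pass to the limit in the optimality system of the localized, box-constrained problems \PbM to obtain $\alpha\bar u+\bar\varphi=0$, split the adjoint contribution of $y_d$ off as uniformly bounded in $L^\infty(Q)$ by its subcriticality, and bootstrap integrability by alternating the $L^p$ state estimates of Theorem \ref{thm_stateeq_lp_estimates}, their analogue for the backward adjoint equation (uniform in $M$ thanks to $f'\ge-\Lambda_f$), and the projection formula, stopping once the exponent exceeds $1+\frac n2$ so that Lemma \ref{lem_embed_w0t}-\ref{lem_23_III} gives the $L^\infty(Q)$ bound. The only cosmetic differences are that you obtain $\alpha\bar u+\bar\varphi=0$ from the variational inequality plus a density argument rather than by passing pointwise to the limit in the truncation identity \eqref{E3.3}, and that you start the bootstrap from the embedding $W(0,T)\hookrightarrow L^{2(n+2)/n}(Q)$ rather than from the $L^2(Q)$ bound on the controls.
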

\begin{proof}
From Lemma \ref{L3.1} we know that there exists a number $M_0 > 0$ and a family $\{u_M\}_{M > M_0}$ of local minimizers of problems \PbM such that \eqref{E3.1}--\eqref{E3.2} hold and $u_M \to \bar u$ in $L^2(Q)$ as $M \to \infty$.
Denote by $y_M$ the state associated with $u_M$.
From \eqref{E3.1} we deduce that $\{\varphi_M\}_{M > M_0}$ is bounded in $W(0,T)$.
Hence, there exists a sequence $\{M_k\}_{k = 1}^\infty$ converging to infinity and a function $\varphi \in W(0,T)$ such that $\varphi_k = \varphi_{M_k} \rightharpoonup \varphi$ in $W(0,T)$.
\tcr{
Due to the compactness of the embedding $W(0,T) \subset L^2(Q)$ \cite[Theorem 5.1]{Lions69},
}
we have that $\varphi_k \to \varphi$ in $L^2(Q)$.
Let us denote $u_k =u_{M_k}$ and $y_k = y_{M_k}$.
Taking a new subsequence, we can also assume that $\varphi_k(x,t) \to \varphi(x,t)$ and $u_k(x,t) \to \bar u(x,t)$ for almost all $(x,t) \in Q$.

Now, from \eqref{E3.2} we infer
\begin{equation}
u_k = \tcr{P_{M_k}}\big(-\frac{1}{\alpha}[\varphi_k + u_k - \bar u]\big).
\label{E3.3}
\end{equation}
Passing pointwise to the limit in the above identity we deduce that $\bar u = -\frac{1}{\alpha}\varphi$.
We are going to prove that $\varphi \in L^\infty(Q)$.
First, the equation \eqref{E3.1} is split in two equations
\begin{equation}
\left\{\begin{array}{l}\displaystyle -\frac{\partial \phi_k}{\partial t} + A^*\phi_k + f'(y_k)\phi_k =  y_k \ \mbox{ in } Q,\\\phi_k  =  0\ \mbox{ on } \Sigma,\ \ \phi_k(x,T) = 0\ \text{ in } \Omega\end{array}\right.
\label{E3.4}
\end{equation}
and
\begin{equation}
\left\{\begin{array}{l}\displaystyle -\frac{\partial \psi_k}{\partial t} + A^*\psi_k + f'(y_k)\psi_k =  y_d \ \mbox{ in } Q,\\\psi_k  =  0\ \mbox{ on } \Sigma,\ \ \psi_k(x,T) = 0\ \text{ in } \Omega.\end{array}\right.
\label{E3.5}
\end{equation}
Then, we have $\varphi_k = \phi_k - \psi_k$, $\phi_k \rightharpoonup \phi$ and $\psi_k \rightharpoonup \psi$ in $W(0,T)$, and $\varphi = \phi - \psi$.
Due to our assumptions on $y_d$,  we know that $\{\psi_k\}_{k=1}^\infty$ is uniformly bounded in $L^\infty(Q)$;
see Lemma \ref{L2.3}-\ref{lem_23_III}. As a consequence, we get that $\psi \in L^\infty(Q)$.
We are going to prove that $\{\phi_k\}_{k = 1}^\infty$ is also bounded in $L^\infty(Q)$. Since $\{u_k\}_{k=1}^\infty$ is bounded in $L^2(Q)$, we infer from Theorem \ref{thm_stateeq_lp_estimates}
\[
\|y_k\|_{L^{2\frac{n+2}{n-2}}(Q)} \le C\big(\|u_k\|_{L^2(Q)} + \|y_0\|_{L^\infty(Q)}\big) \le C_1 \ \ \forall k \ge 1.
\]
If $n \le 5$ then the inequality $2\frac{n+2}{n-2} > 1 + \frac{n}{2}$ holds.
Therefore, applying again Lemma \ref{L2.3}-\ref{lem_23_III} to the equation \eqref{E3.4}, we deduce the existence of a constant $C_2 > 0$ such that $\|\phi_k\|_{L^\infty(Q)} \le C_2$ for every $k \ge 1$.
This yields $\varphi \in L^\infty(Q)$
\tcr{and $\bar u \in L^\infty(Q)$
}
as well.

For $n > 5$ we can repeat the arguments of Theorem \ref{thm_stateeq_lp_estimates} to the equation \eqref{E3.4} and deduce
\[
\|\phi_k\|_{L^{2\frac{(n+2)^2}{(n-2)^2}}(Q)} \le C\|y_k\|_{L^{2\frac{n+2}{n-2}}(Q)} \le CC_1 \quad \forall k \ge 1.
\]
This implies that $\phi \in L^{2\frac{(n+2)^2}{(n-2)^2}}(Q)$ and, consequently $\bar u = -\frac{1}{\alpha}\varphi \in L^{2\frac{(n+2)^2}{(n-2)^2}}$ holds.
Using \eqref{E3.2} we get
\[
u_k = \tcr{P_{M_k}}\big(\frac{-1}{1 + \alpha}[\varphi_k - \bar u]\big).
\]
This implies
\[
\|u_k\|_{L^{2\frac{(n+2)^2}{(n-2)^2}}(Q)} \le \frac{1}{1 + \alpha}\big(\|\varphi_k\|_{L^{2\frac{(n+2)^2}{(n-2)^2}}(Q)} + \|\bar u\|_{L^{2\frac{(n+2)^2}{(n-2)^2}}(Q)}\big) \le C_3.
\]
A second application of Theorem \ref{thm_stateeq_lp_estimates} yields
\[
\|y_k\|_{L^{2\frac{(n+2)^3}{(n-2)^3}}(Q)} \le C\big(\|u_k\|_{L^{2\frac{(n+2)^2}{(n-2)^2}}} + \|y_0\|_{L^\infty(Q)}\big) \le C_4 = C\big(C_3 + \|y_0\|_{L^\infty(Q)}\big) \ \ \forall k \ge 1.
\]
If $2\frac{(n+2)^3}{(n-2)^3} > 1 + \frac{n}{2}$, then we argue as before to deduce that $\bar u = - \frac{1}{\alpha}\varphi \in L^\infty(Q)$.
If not then we can repeat the arguments and increase the $L^p(Q)$ regularity of $y_k$ until we obtain the desired regularity for $\varphi$ after finitely many steps.
\end{proof}

We proved that any local solution of \Pb is a function belonging to $L^\infty(Q)$.
Hence, the problem \Pb is equivalent to the minimization of $J$ on $ L^\infty(Q)$.
It is well known that the mapping $u \mapsto y_u$ from
$L^\infty(Q)$ to $W(0,T) \cap L^\infty(Q)$ is of class $C^1$.
Then, we can write the necessary optimality conditions satisfied by any local minimizer $\bar u$ of \Pb as follows, see \cite[Chapter 5]{Troltzsch2010}
\begin{align}
&\left\{\begin{array}{l}\displaystyle\frac{\partial\bar y}{\partial t} + A\bar y + f(\bar y) = \bar u \ \mbox{ in } Q,\\\bar y  =  0\ \mbox{ on } \Sigma,\ \ \bar y(x,0) = y_0(x)\ \text{ in } \Omega,\end{array}\right.\label{E3.6}\\
&\left\{\begin{array}{l}\displaystyle -\frac{\partial\bar\varphi}{\partial t} + A^*\bar\varphi + f'(\bar y)\bar\varphi =  \bar y - y_d \ \mbox{ in } Q,\\\bar\varphi  =  0\ \mbox{ on } \Sigma,\ \ \bar\varphi(x,T) = 0\ \text{ in } \Omega,\end{array}\right.\label{E3.7}\\
&\quad\ \bar\varphi + \alpha\bar u = 0, \label{E3.8}
\end{align}
where $\bar y \in W(0,T) \cap L^\infty(Q)$ and $\bar\varphi \in H^1(Q) \cap C^{\mu,\frac{\mu}{2}}(\bar Q)$ for some $\mu \in (0,1)$.
The reader is referred to \cite[Theorem III.10.1]{Lad-Sol-Ura68} for the H\"older regularity of $\bar\varphi$.
Then, as a consequence of \eqref{E3.8}, we deduce that any local solution of \Pb also belongs to $H^1(Q) \cap C^{\mu,\frac{\mu}{2}}(\bar Q)$.

{\color{black}
\begin{remark}
\label{R2.4}
Given a measurable subset $\omega \subset \Omega$ with positive Lebesgue measure, all the results of this paper are valid if we replace $u$ in equation \eqref{E1.1} by $u\chi_\omega$ with $u \in L^2(\omega \times (0,T))$ and $\chi_\omega$ being the characteristic function of $\omega$. The changes in the proofs are obvious.

We also observe that in real world applications the case  $u(x,t) = \sum_{j = 1}^mu_j(t)g_j(x)$ with $\{u_j\}_{j = 1}^m \subset L^2(0,T)$ and supp$(g_i) \cap$supp$(g_j) = \emptyset$ for $i \neq j$ is very interesting.
In this case, if $\{g_j\}_{j = 1}^m \subset L^q(\Omega)$ for $q > n$, we deduce from \eqref{E2.6} that the solution of \eqref{E1.1} belongs to $L^\infty(Q)$. Consequently, the mapping $(u_1,\dots,u_m) \to y$ is differentiable from $L^2(0,T)^m$ to $L^\infty(Q) \cap W(0,T)$. Hence, it is obvious to prove the existence of an optimal control and to deduce the optimality system for every local solution $\{\bar u_j\}_{j = 1}^m$. Moreover, since the states belong to $L^\infty(Q)$, the adjoint states belong to $L^\infty(Q)$ as well. In this context, the optimality condition \eqref{E3.8} is replaced by
\[
\int_\Omega g_j(x)\bar\varphi(x,t)\dx + \alpha \bar u_j(t) = 0 \quad \text{for } 1 \le j \le m \text{ and almost all } t\in (0,T).
\]
This implies that $\{\bar u_j\}_{j = 1}^m \subset L^\infty(0,T)$.
\end{remark}}

\section{Optimal Neumann boundary control of a semilinear elliptic equation}
\label{S3}

In this section we study the following control problem
\begin{equation} \label{eq_Pn} \tag{P${}_\mathrm{ell}$}
 \inf_{u \in L^2(\Gamma)} J(u):= \frac{1}{2}\int_\Omega(y_u - y_d)^2\dx + \frac{\alpha}{2}\int_\Gamma u^2\dx,
\end{equation}
where $y_u$ is the solution of the  semilinear elliptic equation
\begin{equation}
\left\{\begin{array}{l}\displaystyle Ay + f(\cdot, y) =  g \ \mbox{ in } \Omega,\\\partial_{\nu_A}y  =  u\ \mbox{ on } \Gamma.\end{array}\right.
\label{E4.1}
\end{equation}

Here, $\Omega \subset \mathbb{R}^n$ with $n > 2$ is a bounded domain with Lipschitz boundary $\Gamma$.
$A$ denotes the same operator as in section \ref{S2} and $\partial_{\nu_A}y = \sum_{i,j = 1}^na_{ij}(x)\partial_{x_i}y\nu_j(x)$, where $\nu(x)$ is the unit outward normal vector to $\Gamma$ at the point $x$.
We make the following assumptions on \Pbn:

{\bfseries (B1)} The coefficients of the operator $A$ satisfy the conditions in {\bfseries (A1)} with the additional requirement
that $a_0 \not\equiv 0$.

{\bfseries (B2)}
$f: \Omega \times \mathbb{R} \longrightarrow \mathbb{R}$ is a Carath\'eodory function that is of class $C^1$ with respect to the second parameter
satisfying
\begin{equation}
f(x,0) = 0\ \text{ and } \frac{\partial f}{\partial y} (x,y) \ge 0\ \text{ for a.a. } x\in \Omega, \forall y \in \mathbb{R}.
\label{E4.4}
\end{equation}
In addition, for every $M>0$ there is $C_{f,M}>0$ such that $|f(x,y)|  + |\frac{\partial f}{\partial y}(x,y)|\le C_{f,M}$ for almost all $x\in \Omega$ and all $|y|\le M$.

{\bfseries (B3)}
$\alpha>0$, $g,y_d \in L^p(\Omega)$  with $p > \frac{n}{2}$.  

The condition $f(\cdot,0) = 0$ was imposed to shorten the presentation. It can be replaced by the condition $f(\cdot,0) \in L^p(\Omega)$  with $p > \frac{n}{2}$.
In the analysis, we can then replace $f$ and $g$ by $f(\cdot,y)-f(\cdot,0)$ and $g-f(\cdot,0)$.

Analogously to the control problem analyzed in section \ref{S2}, here we will prove that \Pbn is well posed and has at least one global minimizer in $L^2(\Gamma)$.
Then, we establish that any local minimizer of \Pbn in $L^2(\Gamma)$ is actually a function of $L^\infty(\Gamma)$.
This regularity implies the $C(\bar\Omega)$ regularity of the locally optimal states, which allows to derive first and second order optimality conditions for \Pbn.
We recall that, under the above conditions, for $n = 2$ and for every $u \in L^2(\Gamma)$ there exists a unique solution $y_u \in H^1(\Omega) \cap C(\bar\Omega)$.
Therefore, we can differentiate the relation $u \to f(y_u)$ and derive first order optimality conditions for \Pbn.
From these conditions we infer as usual the $C(\bar\Omega)$ regularity of the adjoint state and, consequently, the $C(\Gamma)$ regularity of the locally optimal controls. This is why we have selected $n > 2$ in this section.

\subsection{Analysis of the state equation}
\label{S3.1}
Associated with $A$, we define the bilinear form $B:H^1(\Omega) \times H^1(\Omega) \longrightarrow \mathbb{R}$ by
\[
B(y,z) = \int_\Omega\big(\sum_{i,j = 1}^na_{ij}\partial_{x_i}y\partial_{x_j}z + a_0yz\big)\dx.
\]
From Assumption {\bfseries (B1)}  we get
\begin{equation}
\exists \Lambda_B > 0 \ \text{ such that }\ \Lambda_B\|y\|^2_{H^1(\Omega)} \le B(y,y)\ \ \forall y \in H^1(\Omega).
\label{E4.5}
\end{equation}

In the following, $\langle\cdot,\cdot\rangle_\Omega$ and $\langle\cdot,\cdot\rangle_\Gamma$ denote the duality pairing between $H^1(\Omega)^*$ and $H^1(\Omega)$ and $H^{-\frac{1}{2}}(\Gamma)$ and $H^{\frac{1}{2}}(\Gamma)$, respectively.
\tcr{
Let us first state the existence result for weak solutions of the state equation.
We will give its proof below.
}
\begin{theorem}
Given $u \in H^{-\frac{1}{2}}(\Gamma)$ and $g \in H^1(\Omega)^*$, there exists a unique function $y_u \in H^1(\Omega)$ such that $f(\cdot,y_u) \in L^1(\Omega) \cap H^1(\Omega)^*$ and
\begin{equation}
B(y_u,z) + \langle f(\cdot,y_u),z\rangle_\Omega  = \langle g,z\rangle_\Omega + \langle u,z\rangle_\Gamma\ \  \forall z \in H^1(\Omega).
\label{E4.6}
\end{equation}
Furthermore, if $u_k \to u$ in $H^{-\frac{1}{2}}(\Gamma)$, then $y_{u_k} \to y_u$ in $H^1(\Omega)$ and $f(\cdot,y_{u_k}) \to f(\cdot,y_u)$ in $L^1(\Omega) \cap H^1(\Omega)^*$ hold.
\label{T4.1}
\end{theorem}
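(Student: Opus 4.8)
The plan is to adapt the truncation strategy of Theorem~\ref{T2.1} to the stationary, genuinely $L^1$ setting that is forced by the low regularity of the data $g\in H^1(\Omega)^*$ and $u\in H^{-\frac12}(\Gamma)$. For $k\ge1$ set $P_k(s)=\min\{\max\{-k,s\},k\}$ and $f_k(x,s)=f(x,P_k(s))$. By \eqref{E4.4} and (B2), each $f_k$ is a bounded Carath\'eodory function that is monotone in $s$ and vanishes at $s=0$, so the operator $y\mapsto B(y,\cdot)+f_k(\cdot,y)$ from $H^1(\Omega)$ into its dual is bounded, hemicontinuous, monotone and, by \eqref{E4.5}, coercive. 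The Browder--Minty theorem (or, as in Theorem~\ref{T2.1}, Schauder's theorem applied to the solution map of the associated linear problem) then yields a unique $y_k\in H^1(\Omega)$ with $B(y_k,z)+\langle f_k(\cdot,y_k),z\rangle_\Omega=\langle g,z\rangle_\Omega+\langle u,z\rangle_\Gamma$ for all $z\in H^1(\Omega)$.

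Next I would derive two $k$-independent bounds. Testing with $z=y_k$ and using $f_k(x,s)s\ge0$, the coercivity \eqref{E4.5} and the trace inequality gives $\|y_k\|_{H^1(\Omega)}\le \Lambda_B^{-1}(\|g\|_{H^1(\Omega)^*}+C\|u\|_{H^{-1/2}(\Gamma)})$. For the $L^1$ bound on the nonlinearity I would test with the bounded truncation $z=P_1(y_k)$: since $B(y_k,P_1(y_k))\ge0$ (ellipticity on $\{|y_k|<1\}$ together with $a_0y_kP_1(y_k)\ge0$) and $f_k(\cdot,y_k)P_1(y_k)\ge \chi_{\{|y_k|\ge1\}}|f_k(\cdot,y_k)|$, while $|f_k(\cdot,y_k)|\le C_{f,1}$ on $\{|y_k|<1\}$ by (B2), this produces $\|f_k(\cdot,y_k)\|_{L^1(\Omega)}\le C$ uniformly in $k$. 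With the $H^1$ bound, Rellich compactness gives a subsequence with $y_k\rightharpoonup y$ in $H^1(\Omega)$, $y_k\to y$ in $L^2(\Omega)$ and a.e., whence $f_k(\cdot,y_k)\to f(\cdot,y)$ a.e.

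The delicate step, and the one I expect to be the main obstacle, is passing to the limit in the nonlinear term: an $L^1$ bound plus a.e.\ convergence only yields weak-$*$ convergence of $f_k(\cdot,y_k)$ to a measure, and one must exclude a singular (concentration) part in order to identify the limit as the $L^1$ function $f(\cdot,y)$. I would establish equi-integrability of $\{f_k(\cdot,y_k)\}$ by testing with the tail cut-offs $S_j(y_k):=\sign(y_k)\min\{(|y_k|-j)^+,1\}$; arguing as for $P_1$ one obtains $\int_{\{|y_k|\ge j+1\}}|f_k(\cdot,y_k)|\dx\le \langle g,S_j(y_k)\rangle_\Omega+\langle u,S_j(y_k)\rangle_\Gamma$. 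The crux is that the right-hand side tends to $0$ as $j\to\infty$ uniformly in $k$: the $L^2$-part of $\|S_j(y_k)\|_{H^1(\Omega)}$ is controlled by $|\{|y_k|>j\}|\le C/j^2$, while the gradient annulus term $\int_{\{j<|y_k|<j+1\}}|\nabla y_k|^2$ requires the a.e.\ convergence of the gradients and equi-integrability of $|\nabla y_k|^2$, which I would extract by the Boccardo--Murat device of testing the difference of equations with $P_1(y_k-y)$ and exploiting the monotonicity of $f$. Equi-integrability and a.e.\ convergence then give $f_k(\cdot,y_k)\to f(\cdot,y)$ in $L^1(\Omega)$ by Vitali's theorem; passing to the limit in the weak form (first for bounded $z$) shows that $\langle f(\cdot,y),z\rangle_\Omega:=\langle g,z\rangle_\Omega+\langle u,z\rangle_\Gamma-B(y,z)$ extends $f(\cdot,y)$ to an element of $H^1(\Omega)^*$ agreeing with the $L^1$ function on bounded test functions, so that \eqref{E4.6} holds with $y=y_u$.

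Finally, uniqueness would follow by subtracting the equations for two solutions, testing with the truncations $P_m(y_1-y_2)$, discarding the nonlinear contribution via the monotonicity of $f$, and invoking the coercivity \eqref{E4.5} after letting $m\to\infty$. The continuity statement is obtained along the same lines: if $u_k\to u$ in $H^{-\frac12}(\Gamma)$, the solutions $y_{u_k}$ are bounded in $H^1(\Omega)$ by the a priori estimate, a subsequence converges to a solution of the limit equation which, by uniqueness, is $y_u$; the strong $H^1$-convergence and the $L^1$-equi-integrability upgrade this to $y_{u_k}\to y_u$ in $H^1(\Omega)$ and $f(\cdot,y_{u_k})\to f(\cdot,y_u)$ in $L^1(\Omega)$, while convergence in $H^1(\Omega)^*$ follows from the representation $f(\cdot,y_{u_k})=\langle g,\cdot\rangle_\Omega+\langle u_k,\cdot\rangle_\Gamma-B(y_{u_k},\cdot)$.
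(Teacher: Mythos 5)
Your skeleton (truncation $f_k(x,s)=f(x,P_k(s))$, existence for the truncated problems by monotone operators/Schauder, the $H^1$ a priori bound from testing with $y_k$, a.e.\ convergence, Vitali, and uniqueness/continuity via monotonicity and coercivity) is exactly the paper's, and your $L^1$ bound via the test function $P_1(y_k)$ is a fine variant of the paper's. But the step you yourself flag as the crux --- equi-integrability of $\{f_k(\cdot,y_k)\}$ --- is where your proposal has a genuine gap. Your plan is to test with the tail cut-offs $S_j(y_k)$ and to show that $\langle g,S_j(y_k)\rangle_\Omega+\langle u,S_j(y_k)\rangle_\Gamma\to 0$ as $j\to\infty$ uniformly in $k$. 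This forces you to control the annulus term $\int_{\{j<|y_k|<j+1\}}|\nabla y_k|^2\dx$ uniformly in $k$, which (given a.e.\ convergence of gradients) amounts to equi-integrability of $|\nabla y_k|^2$, i.e.\ essentially to strong $H^1$-compactness of the sequence $y_k$. You propose to get this from the Boccardo--Murat device, but that is not carried out, and it is circular in spirit: strong convergence of the truncated solutions is something the paper only obtains \emph{after} the limit equation is identified (and even then only for the continuity statement, via monotonicity), not before. Moreover, monotonicity arguments across different truncation levels $f_k$, $f_m$ do not combine cleanly, so the Cauchy-type estimate you would need is not routine. As written, the limit passage in the nonlinear term is not established.

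The gap is real but easy to fill, and the fix is exactly the paper's argument, which uses an ingredient you already have: testing with $y_k$ gives not only the $H^1$ bound but also the uniform bound $\int_\Omega f_k(\cdot,y_k)y_k\dx\le C_2$ (since $B(y_k,y_k)\ge 0$). Then, for any measurable $E\subset\Omega$ and any $M>0$, split $E$ into $\{|y_k|>M\}$ and $\{|y_k|\le M\}$: on the first set the sign condition \eqref{E4.4} gives $|f_k(\cdot,y_k)|=f_k(\cdot,y_k)\sign(y_k)\le \frac1M f_k(\cdot,y_k)y_k$, and on the second set {\bfseries (B2)} gives $|f_k(\cdot,y_k)|\le C_{f,M}$, so that
\begin{equation*}
\int_E|f_k(\cdot,y_k)|\dx\le \frac{1}{M}\int_\Omega f_k(\cdot,y_k)y_k\dx + C_{f,M}|E|\le \frac{C_2}{M}+C_{f,M}|E|.
\end{equation*}
Choosing first $M$ large and then $|E|$ small yields equi-integrability uniformly in $k$, with no gradient compactness whatsoever; your worry about a singular (concentration) part in the limit is precisely what this de la Vall\'ee-Poussin-type bound excludes. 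With this replacement, the rest of your argument (Vitali, identification of the $H^1(\Omega)^*$ representative by passing to the limit in the weak form, uniqueness via truncated test functions --- which is the content of Lemma \ref{L4.1} --- and the continuity statement) goes through and coincides with the paper's proof.
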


According to this result,
we call $y_u \in H^1(\Omega)$ a weak solution of \eqref{E4.1} if $f(\cdot,y_u) \in L^1(\Omega) \cap H^1(\Omega)^*$ and \eqref{E4.6} is satisfied.

\tcr{
If $h \in H^1(\Omega)^*$ and there exists $\phi \in L^1(\Omega)$ such that
\[
\langle h,z\rangle = \int_\Omega \phi(x)z(x)\dx \quad \forall z \in H^1(\Omega) \cap L^\infty(\Omega),
\]
we say that $h \in L^1(\Omega) \cap H^1(\Omega)^*$. In this case, we identify $h$ with $\phi$.
}

\tcr{
If $z \in H^1(\Omega)$ satisfies
}
$hz \in L^1(\Omega)$ we also have that $\langle h,z\rangle_\Omega  = \int_\Omega h(x)z(x)\dx$.
Indeed, define $z_k = P_k(z)$ for every integer $k \ge 1$.
Then, $z_k \in H^1(\Omega) \cap L^\infty(\Omega)$ and $z_k \to z$ in $H^1(\Omega)$ holds.
Moreover, since $hz \in L^1(\Omega)$, $h(x)z_k(x) \to h(x)z(x)$ for almost all $x \in \Omega$, and $|h(x)z_k(x)| \le |h(x)z(x)|$, Lebesgue's dominated convergence theorem implies that $hz_k \to hz$ in $L^1(\Omega)$.
These arguments yield
\[
\int_\Omega h(x)z(x)\dx = \lim_{k \to \infty}\int_\Omega h(x)z_k(x)\dx = \lim_{k \to \infty}\langle h,z_k\rangle_\Omega  = \langle h,z\rangle_\Omega .
\]

\begin{lemma}
The following properties are satisfied:
\begin{enumerate}
\item\label{L4.1-i}  If $f(\cdot,y) \in H^1(\Omega)^* \cap L^1(\Omega)$, then $f(\cdot,y)y \in L^1(\Omega)$ holds.
\item\label{L4.1-ii} If $y, z \in H^1(\Omega)$ and $f(\cdot,y), f(\cdot,z) \in H^1(\Omega)^* \cap L^1(\Omega)$, then the inequality $\langle f(\cdot,y) - f(\cdot,z),y - z\rangle_\Omega  \ge 0$ is fulfilled.
\end{enumerate}
\label{L4.1}
\end{lemma}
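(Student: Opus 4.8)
The plan is to exploit the truncation argument that was already used in the preceding paragraph, namely approximating a test function $z \in H^1(\Omega)$ by its truncations $z_k = P_k(z) \in H^1(\Omega) \cap L^\infty(\Omega)$, together with the monotonicity and sign structure of $f$ coming from \textbf{(B2)}. The key facts I would lean on are: $f(x,\cdot)$ is nondecreasing with $f(x,0)=0$ (so $f(x,s)s \ge 0$ for all $s$), and the identification established above that for $h \in L^1(\Omega) \cap H^1(\Omega)^*$ one has $\langle h,z\rangle_\Omega = \int_\Omega h z \dx$ whenever $hz \in L^1(\Omega)$.

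For part \ref{L4.1-i}, since $f(\cdot,y) \in L^1(\Omega)$ and $f(x,y(x))\,y(x) \ge 0$ a.e.\ by the sign condition \eqref{E4.4}, the integrand $f(\cdot,y)y$ is a nonnegative measurable function; it remains only to check its integral is finite. First I would apply the truncations: with $y_k = P_k(y)$ we have $f(x,y)y_k \ge 0$ a.e., $f(x,y)y_k \uparrow f(x,y)y$ monotonically as $k \to \infty$ (because $0 \le y_k \to y$ on $\{y>0\}$, $0 \ge y_k \to y$ on $\{y<0\}$, and $f(x,y)$ has the same sign as $y$). Since $f(\cdot,y) \in H^1(\Omega)^*$ and $y_k \in H^1(\Omega)\cap L^\infty(\Omega)$ with $y_k \to y$ in $H^1(\Omega)$, the pairings $\langle f(\cdot,y),y_k\rangle_\Omega = \int_\Omega f(\cdot,y)y_k\dx$ converge to $\langle f(\cdot,y),y\rangle_\Omega$, which is finite. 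By the monotone convergence theorem the limit of these integrals equals $\int_\Omega f(\cdot,y)y\dx$, so this integral is finite and $f(\cdot,y)y \in L^1(\Omega)$.

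For part \ref{L4.1-ii}, the monotonicity of $f(x,\cdot)$ gives the pointwise inequality $\big(f(x,y)-f(x,z)\big)(y-z) \ge 0$ a.e.\ in $\Omega$. By part \ref{L4.1-i} applied to $y$ and to $z$, both $f(\cdot,y)y$ and $f(\cdot,z)z$ lie in $L^1(\Omega)$; the cross terms $f(\cdot,y)z$ and $f(\cdot,z)y$ require a little more care, and this is the step I expect to be the main obstacle, since one cannot immediately assert they are integrable. I would instead argue through the identification directly: because $\big(f(\cdot,y)-f(\cdot,z)\big)(y-z)$ is a nonnegative measurable function and $f(\cdot,y)-f(\cdot,z) \in L^1(\Omega) \cap H^1(\Omega)^*$, I apply the same truncation $w_k = P_k(y-z)$, note $\big(f(x,y)-f(x,z)\big)w_k \ge 0$ increases pointwise to $\big(f(x,y)-f(x,z)\big)(y-z)$, and use $\langle f(\cdot,y)-f(\cdot,z),w_k\rangle_\Omega = \int_\Omega \big(f(\cdot,y)-f(\cdot,z)\big)w_k\dx$ together with convergence of $w_k \to y-z$ in $H^1(\Omega)$. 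The left-hand pairings converge to $\langle f(\cdot,y)-f(\cdot,z),y-z\rangle_\Omega$, while monotone convergence forces the right-hand integrals up to $\int_\Omega \big(f(\cdot,y)-f(\cdot,z)\big)(y-z)\dx \ge 0$; matching the two limits yields $\langle f(\cdot,y)-f(\cdot,z),y-z\rangle_\Omega \ge 0$ and simultaneously shows this duality product is well defined and finite.
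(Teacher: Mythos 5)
Your proof is correct and takes essentially the same route as the paper: truncation by $P_k$, the identification $\langle h,z\rangle_\Omega=\int_\Omega hz\dx$ for $h\in L^1(\Omega)\cap H^1(\Omega)^*$ and bounded $z\in H^1(\Omega)\cap L^\infty(\Omega)$, and passage to the limit in the duality pairing via $H^1(\Omega)$-convergence of the truncations, all driven by the sign and monotonicity structure of $f$ from \textbf{(B2)}. The only cosmetic differences are that you invoke monotone convergence where the paper uses Fatou's lemma combined with the pointwise comparison $f(\cdot,y_k)y_k\le f(\cdot,y)y_k$ in the first statement, and that in the second statement you truncate the difference $y-z$ rather than truncating $y$ and $z$ separately as the paper does.
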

\begin{proof}
To prove the first statement, we define $y_k = P_k(y)$ for every integer $k \ge 1$.
Then, we have that $y_k \to y$ in $H^1(\Omega)$, $y_k(x) \to y(x)$ for almost all $x \in \Omega$, and $\{y_k\}_{k = 1}^\infty \subset L^\infty(\Omega)$.
Hence, we also have $f(\cdot,y_k(x)) \to f(\cdot,y(x))$ for almost all $x \in \Omega$.
Moreover, \eqref{E4.4} implies that $f(\cdot,s)s \ge 0$ for every $s \in \mathbb{R}$. Therefore, using Fatou's lemma we get
\begin{align*}
\int_\Omega f(\cdot,y)y\dx &\le \liminf_{k \to \infty} \int_\Omega f(\cdot,y_k)y_k\dx \le \liminf_{k \to \infty}\int_\Omega f(\cdot,y)y_k \dx\\
& = \lim_{k \to \infty}\langle f(\cdot,y),y_k\rangle_\Omega  = \langle f(\cdot,y),y\rangle_\Omega  < \infty.
\end{align*}
Thus, we have that $f(\cdot,y)y \in L^1(\Omega)$.
For the second part of the lemma we consider the projections $y_k = P_k(y)$ and $z_k = P_k(z)$  and use the monotonicity of $f$ as follows
\begin{multline*}
\langle f(\cdot,y) - f(\cdot,z),y - z\rangle_\Omega  = \lim_{k \to \infty}\langle f(\cdot,y) - f(\cdot,z),y_k - z_k\rangle_\Omega
\\
= \lim_{k \to \infty}\int_\Omega (f(\cdot,y) - f(\cdot,z))(y_k - z_k)\dx \ge 0.
\end{multline*}
\end{proof}

\tcr{
Now, we have everything at hand to prove Theorem \ref{T4.1}.
}

{\em Proof of Theorem \ref{T4.1}.} For every integer $k \ge 1$ we define the truncation $f_k(x,s) = f(x,P_k(s))$.
Applying \tcr{monotone operator theory or Schauder's fixed point theorem} we infer the existence of a function $y_k \in H^1(\Omega)$ such that
\begin{equation}
\left\{\begin{array}{l}\displaystyle Ay_k + f_k(\cdot,y_k) =  g \ \mbox{ in } \Omega,\\\partial_{\nu_A}y_k  =  u\ \mbox{ on } \Gamma\ \text{ in } \Omega;\end{array}\right.
\label{E4.7}
\end{equation}
\tcr{
see \cite[Theorem 3.1, Lemma 3.2]{Casas93} or \cite{Casas-Troltzsch2009}.} Testing this equation with $y_k$ and using $f_k(\cdot,s)s \ge 0$, we infer with \eqref{E4.5}
\[
\|y_k\|_{H^1(\Omega)} \le C_1\big(\|g\|_{H^1(\Omega)^*} + \|u\|_{H^{-\frac{1}{2}}(\Gamma)}\big).
\]
Therefore, we take a subsequence, denoted in the same way, such that $y_k \rightharpoonup y$ in $H^1(\Omega)$, $y_k \to y$ in $L^2(\Omega)$, and $y_k(x) \to y(x)$ for almost all $x \in \Omega$.
This implies that $f_k(\cdot,y_k(x)) \to f(\cdot,y(x))$ for almost all $x \in \Omega$.
By {\bfseries (B2)}, there exists $C_{f,1}>0$ such that $|f(x,s)|\le C_{f,1}$ for almost all $x\in \Omega$ and all $|s|\le 1$.
Using the weak formulation, we can derive the following bound
\[\begin{split}
 \int_\Omega|f(\cdot,y_k)|\dx &\le |\Omega|C_{f,1} + \int_\Omega f_k(\cdot,y_k)y_k\dx\\
&= |\Omega|C_{f,1} + \langle g,y_k\rangle_\Omega + \langle u,y_k\rangle_\Gamma - B(y_k,y_k)
\le C_2 < \infty\ \ \forall k \ge 1,
\end{split}\]
 and $\{f_k(\cdot,y_k)y_k\}_{k = 1}^\infty$ is bounded in $L^1(\Omega)$.
Then, from Fatou's lemma we deduce
\[
\int_\Omega|f(\cdot,y)| \dx \le \liminf_{k \to \infty}\int_\Omega|f(\cdot,y_k)|\dx \le C_2 .
\]
Thus, $f(\cdot,y) \in L^1(\Omega)$ holds.
Let us prove that $\{f_k(\cdot,y_k)\}_{k = 1}^\infty$ is equi-integrable.
Given $\varepsilon > 0$ we select $M > 0$ such that $\frac{C_2}{M} < \frac{\varepsilon}{2}$.
Let $C_{f,M}$ be given by {\bfseries (B2)} and take $\delta > 0$ such that $\delta C_{f,M} < \frac{\varepsilon}{2}$.
Then, for every measurable set $E \subset \Omega$ with $|E| < \delta$ and every $k \ge 1$ we have
\[
\int_E|f_k(\cdot,y_k)|\dx \le \frac{1}{M}\int_\Omega f_k(\cdot,y_k)y_k\dx + C_{f,M}|E| \le \frac{C_2}{M} + C_{f,M}\delta < \varepsilon.
\]
Therefore, from Vitali's theorem we deduce that $f_k(\cdot,y_k) \to f(\cdot,y)$ in $L^1(\Omega)$.
Moreover, we have
\[
\langle f_k(\cdot,y_k),z\rangle_\Omega  = \int_\Omega f_k(\cdot,y_k)z\dx = \langle g,z\rangle_\Omega + \langle u,z\rangle_\Gamma - B(y_k,z) \ \  \forall z \in H^1(\Omega),
\]
which implies the boundedness of $\{f_k(\cdot,y_k)\}_{k = 1}^\infty$ in $H^1(\Omega)^*$.
All together yields $f(\cdot,y) \in H^1(\Omega)^*$ and $f_k(\cdot,y_k) \rightharpoonup f(\cdot,y)$ in $H^1(\Omega)^*$.
Further, passing to the limit in the above identity we obtain that $y$ satisfies \eqref{E4.6}.

Let us prove the uniqueness.
If $y_1$ and $y_2$ are solutions of \eqref{E4.1}, subtracting the identities \eqref{E4.6} for $y_2$ and $y_1$ and taking $z = y_2 - y_1$ we infer with \eqref{E4.5} and Lemma \ref{L4.1}-\ref{L4.1-ii}
\[
\Lambda_B\|y_2 - y_1\|^2_{H^1(\Omega)} \le B(y_2 - y_1,y_2 - y_1) + \langle f(\cdot,y_2) - f(\cdot,z_1),y_2 - y_1\rangle_\Omega   = 0.
\]

Finally, we prove the continuous dependence of $y_u$ with respect to $u$.
Let be $\{u_k\}_{k = 1}^\infty$ be a sequence converging
strongly to $u$ in $H^{-\frac{1}{2}}(\Gamma)$.
Taking $u = u_k$ and $z = y_{u_k}$ in \eqref{E4.6} we infer with \eqref{E4.5} and Lemma \ref{L4.1}-\ref{L4.1-ii}
\begin{align*}
\Lambda_B\|y_{u_k}\|^2_{H^1(\Omega)} &\le B(y_{u_k},y_{u_k}) + \langle f(\cdot,y_{u_k}),y_{u_k}\rangle_\Omega \\
& \le C_3\big(\|g\|_{H^1(\Omega)^*} + \|u_k\|_{H^{-\frac{1}{2}}(\Gamma)}\big)\|y_{u_k}\|_{H^1(\Omega)}.
\end{align*}
This implies the boundedness of $\{y_{u_k}\}_{k = 1}^\infty$ in $H^1(\Omega)$ and, consequently, the convergence $y_{u_k} \rightharpoonup y$ in $H^1(\Omega)$ for a subsequence, denoted in the same way. Moreover, using Lemma \ref{L4.1}-\ref{L4.1-i}, the above inequality also leads to the uniform boundedness of the integral $\int_\Omega f(\cdot,y_k)y_k\dx$.
Hence, we can argue as above and deduce the equi-integrability of $\{f(\cdot,y_k)\}_{k = 1}^\infty$ and the convergence $f(\cdot,y_k) \to f(\cdot,y)$ in $L^1(\Omega)$ for a subsequence, again denoted in the same way.
We also have that $f(\cdot,y_k) \rightharpoonup f(\cdot,y)$ in $H^1(\Omega)^*$.
Now, it is easy to pass to the limit in the equations satisfied by $y_{u_k}$ and to deduce that $y = y_u$.
From the uniqueness of the solution of \eqref{E4.6} we get that the whole sequence $\{y_{u_k}\}_{k = 1}^\infty$ converges weakly to $y_u$ in $H^1(\Omega)$.
Finally, the strong convergence follows with \eqref{E4.5} and Lemma \ref{L4.1}-\ref{L4.1-ii}
\begin{align*}
\Lambda_B\|y_{u_k} - y_u\|^2_{H^1(\Omega)} &\le B(y_{u_k} - y_u,y_{u_k} - y_u) + \langle f(\cdot,y_{u_k}) - f(\cdot,y_u),y_{u_k} - y_u\rangle_\Omega\\
& = \langle u_k - u,y_{u_k} - y_u\rangle_\Gamma \to 0\ \text{ as } k \to \infty.
\end{align*}
Now, we prove the convergence of $f(\cdot,y_{u_k}) \to f(\cdot,y_u)$ in $H^1(\Omega)^*$ as follows
\begin{align*}
\|f(\cdot,y_{u_k}) - f(\cdot,y_u)\|_{H^1(\Omega)^*} &= \sup_{\|z\|_{H^1(\Omega)} \le 1}|\langle f(\cdot,y_{u_k}) - f(\cdot,y_u),z\rangle_\Omega|\\
& = \sup_{\|z\|_{H^1(\Omega)} \le 1}|\langle u_k - u,z\rangle_\Gamma - B(y_{u_k} - y_u,z)|\\
&\le C\big(\|u_k - u\|_{H^{-\frac{1}{2}}(\Gamma)} + \|y_{u_k} - y_u\|_{H^1(\Omega)}\big) \to 0  \text{ as } k \to \infty,
\end{align*}
The proof of the convergence $f(\cdot,y_{u_k}) \to f(\cdot,y_u)$ in $L^1(\Omega)$ follows from Vitali's theorem as above taking into account again that $\int_\Omega f(\cdot,y_{u_k})y_{u_k}\dx = \langle f(\cdot,y_{u_k}),y_{u_k}\rangle_\Omega \le C'$ for every $k$.
\endproof

The reader is referred to \cite{Brezis-Browder78} for the study of the Dirichlet problem corresponding to the equation \eqref{E4.1}. See also \cite{BMP2007}.

In the next theorem we establish some $L^q$ estimates for the solution of \eqref{E4.1}.

\begin{theorem}
Let $u\in L^r(\Gamma)$ and $g\in L^s(\Omega)$ with
\[
r \in \left[2\frac{n - 1}{n}, n-1\right), \quad s \in \left[\frac{2n}{n + 2},\frac n2\right)
\]
satisfying
\begin{equation}\label{eq_compat_qq}
 (n-1) \left( \frac1r - \frac1{n-1}\right) = n \left( \frac1s-\frac 2n\right)
\end{equation}
be given.
Let $q$ and $\tilde q$ be defined by
\begin{equation}\label{eq_choice_qq}
 \frac 1q = \frac 1r - \frac 1{n-1}, \quad
 \frac 1{\tilde q} = \frac 1s - \frac 2n.
\end{equation}
Then $y_u \in L^{\tilde q}(\Omega)$ and its trace ${y_u}_{\mid_\Gamma} \in L^q(\Gamma)$ hold.
Moreover, there exists a constant $C = C(r,s)$ independent of $g$ and $u$ such that
\[
\|y_u\|_{L^{\tilde q}(\Omega)} + \|y_u\|_{L^q(\Gamma)} \le C\big(\|g\|_{L^s(\Omega)} + \|u\|_{L^r(\Gamma)}\big).
\]
If  $u\in L^{n - 1}(\Gamma)$ and $g\in L^{\frac{n}{2}}(\Omega)$, then the above estimates are valid for every $q$ and $\tilde q$ smaller than $\infty$.
\label{T4.2}
\end{theorem}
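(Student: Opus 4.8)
The plan is to run a single testing-and-bootstrapping argument with the power test function $|y|^{p-2}y$, exactly parallel to the one in Theorem \ref{thm_stateeq_lp_estimates}, where a hidden exponent $p$ ties $r$, $s$, $q$ and $\tilde q$ together. First I would identify $p$: the two sides of the compatibility condition \eqref{eq_compat_qq} are precisely $(n-1)(\frac1r-\frac1{n-1})$ and $n(\frac1s-\frac2n)$, and a short computation shows that setting $\frac{n-2}{p}$ equal to this common value yields $r=\frac{p(n-1)}{n-2+p}$, $s=\frac{pn}{n-2+2p}$, and, via \eqref{eq_choice_qq}, $q=\frac{p(n-1)}{n-2}$ and $\tilde q=\frac{pn}{n-2}$. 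The admissible ranges $r\in[2\frac{n-1}{n},n-1)$ and $s\in[\frac{2n}{n+2},\frac n2)$ correspond to $p\in[2,\infty)$, the lower endpoints being $p=2$. The key facts I would record for later are the conjugate-exponent identities $(p-1)s'=\tilde q$ and $(p-1)r'=q$, where $s'$, $r'$ are the Hölder conjugates of $s$, $r$; both hold with \emph{equality}, which is what makes a clean Young absorption possible.

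Next, assuming first that the data are regular enough that $y:=y_u\in H^1(\Omega)\cap L^\infty(\Omega)$ (so that $|y|^{p-2}y\in H^1(\Omega)\cap L^\infty(\Omega)$ is an admissible test function and every term below is finite), I would test the weak formulation \eqref{E4.6} with $|y|^{p-2}y$. As in the parabolic proof, the principal part produces $\frac{4(p-1)}{p^2}\Lambda_A\|\nabla(|y|^{p/2})\|_{L^2(\Omega)}^2$, the zeroth-order term $\int_\Omega a_0|y|^p$ is nonnegative by \textbf{(B1)}, and the nonlinear term $\langle f(\cdot,y),|y|^{p-2}y\rangle_\Omega=\int_\Omega f(\cdot,y)\,y\,|y|^{p-2}\ge0$ by the sign condition $f(x,s)s\ge0$ coming from \textbf{(B2)}. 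Dropping the two nonnegative terms and estimating the right-hand side by Hölder gives
\[
\tfrac{4(p-1)}{p^2}\Lambda_A\|\nabla(|y|^{p/2})\|_{L^2(\Omega)}^2 \le \|g\|_{L^s(\Omega)}\|y\|_{L^{(p-1)s'}(\Omega)}^{p-1}+\|u\|_{L^r(\Gamma)}\|y\|_{L^{(p-1)r'}(\Gamma)}^{p-1}.
\]
Setting $w=|y|^{p/2}$ and invoking the Sobolev embedding $H^1(\Omega)\hookrightarrow L^{2n/(n-2)}(\Omega)$ together with the trace embedding $H^1(\Omega)\hookrightarrow L^{2(n-1)/(n-2)}(\Gamma)$ converts $\|\nabla w\|_{L^2}^2+\|w\|_{L^2}^2$ into $\|y\|_{L^{\tilde q}(\Omega)}^p+\|y\|_{L^q(\Gamma)}^p$ on the left, at the cost of the lower-order term $\|w\|_{L^2(\Omega)}^2=\|y\|_{L^p(\Omega)}^p$ on the right.

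Because $(p-1)s'=\tilde q$ and $(p-1)r'=q$ exactly, the factors $\|y\|_{L^{(p-1)s'}(\Omega)}^{p-1}$ and $\|y\|_{L^{(p-1)r'}(\Gamma)}^{p-1}$ are $\|y\|_{L^{\tilde q}(\Omega)}^{p-1}$ and $\|y\|_{L^q(\Gamma)}^{p-1}$, so Young's inequality absorbs the corresponding $\varepsilon$-fractions into the left and leaves $C(\|g\|_{L^s(\Omega)}^p+\|u\|_{L^r(\Gamma)}^p)$. The remaining lower-order term I would control by interpolating $\|y\|_{L^p(\Omega)}\le\|y\|_{L^2(\Omega)}^{1-\theta}\|y\|_{L^{\tilde q}(\Omega)}^{\theta}$ (legitimate since $2\le p<\tilde q$) and applying Young once more; the $\|y\|_{L^{\tilde q}}$-part is absorbed, while $\|y\|_{L^2(\Omega)}\le C(\|g\|_{L^s}+\|u\|_{L^r})$ follows from Theorem \ref{T4.1} and the embeddings $L^s(\Omega)\hookrightarrow H^1(\Omega)^*$, $L^r(\Gamma)\hookrightarrow H^{-1/2}(\Gamma)$, valid on the stated ranges. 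This gives the estimate for regular $y$. To remove the regularity assumption I would truncate the data, $g_k=P_k(g)$ and $u_k=P_k(u)$; by \textbf{(B2)} the solutions $y_k$ associated with these bounded data lie in $L^\infty(\Omega)$, so the estimate just proved applies uniformly, and I pass to the limit using the convergence $y_{k}\to y_u$ from Theorem \ref{T4.1} together with weak lower semicontinuity of the $L^{\tilde q}$ and $L^q$ norms. The endpoint case $r=n-1$, $s=\frac n2$ follows by taking $p$ finite but arbitrarily large and using $L^{n-1}(\Gamma)\hookrightarrow L^{r}(\Gamma)$, $L^{n/2}(\Omega)\hookrightarrow L^{s}(\Omega)$.

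The step I expect to be the main obstacle is the rigorous justification of the test function $|y|^{p-2}y$ and of the absorption: for general $L^r$–$L^s$ data $|y|^{p-2}y$ need not lie in $H^1(\Omega)$, and the Young absorption is legitimate only once $\|y\|_{L^{\tilde q}(\Omega)}$ and $\|y\|_{L^q(\Gamma)}$ are known to be finite a priori. This is exactly why the computation must first be carried out for the truncated, $L^\infty$-bounded solutions $y_k$—where $|y_k|^{p-2}y_k\in H^1(\Omega)\cap L^\infty(\Omega)$ and all quantities are finite—and only then passed to the limit; establishing the required $L^\infty(\Omega)$-regularity of $y_k$ under \textbf{(B2)} (via a Stampacchia/De Giorgi truncation argument for the Neumann problem) is the technical heart of the approximation.
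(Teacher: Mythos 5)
Your core computation is the same as the paper's: the hidden exponent $p$ defined through \eqref{eq_compat_qq} (your formulas $q=\frac{p(n-1)}{n-2}$, $\tilde q=\frac{pn}{n-2}$ and the identities $(p-1)s'=\tilde q$, $(p-1)r'=q$ match the paper's \eqref{eq_defp_qq} and the conjugate-exponent relations used before \eqref{eq_C2}), the test function $|y|^{p-2}y$, the Sobolev and trace embeddings of $H^1(\Omega)$, and the H\"older--Young absorption. Where you genuinely diverge is in the step you correctly flag as the obstacle: how to make the formal computation rigorous. You prove the estimate for bounded solutions and then approximate the \emph{data}, $g_k=P_k(g)$, $u_k=P_k(u)$, which forces you to invoke an auxiliary $L^\infty(\Omega)$-regularity theorem for the semilinear Neumann problem with bounded data (a Stampacchia/De Giorgi argument you would have to supply, since the paper never proves it for the state equation), plus a small extension of Theorem \ref{T4.1} to perturbations of $g$ as well as $u$. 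The paper instead truncates the \emph{solution}: it sets $y_k=P_k(y_u)$ and tests \eqref{E4.6} with $|y_k|^{p-2}y_k$, which is admissible in $H^1(\Omega)\cap L^\infty(\Omega)$ for the actual (possibly unbounded) solution; monotonicity of $f$ and the structure of the truncation give $B(y_u,|y_k|^{p-2}y_k)\ge\Lambda_B\frac{4(p-1)}{p^2}\||y_k|^{p/2}\|^2_{H^1(\Omega)}$, all left-hand norms are automatically finite so the absorption is legitimate, and the theorem follows by letting $k\to\infty$ with monotone convergence of $\|y_k\|_{L^{\tilde q}(\Omega)}$ and $\|y_k\|_{L^q(\Gamma)}$. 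The paper's device thus makes the proof self-contained (no regularity theory beyond Theorem \ref{T4.1}), whereas your route is correct but rests on a nontrivial external ingredient; conversely, your version isolates a reusable regular-data estimate and mirrors the data-truncation scheme of part 2 of Theorem \ref{thm_stateeq_lp_estimates}. Two minor remarks: keeping the $a_0$-term and using the coercivity \eqref{E4.5}, as the paper does, lets you dispense with your interpolation treatment of the lower-order term $\|y\|_{L^p(\Omega)}^p$; and note that, consistent with \eqref{eq39}, neither argument can be pushed to an $L^\infty$ bound, which is why the endpoint statement is only for finite $q,\tilde q$.
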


The conditions in \eqref{eq_choice_qq} show the improvements in the integrability, while \eqref{eq_compat_qq}
enforces some compatibility between all these exponents.

\begin{proof}
The proof is similar to the one of Theorem \ref{thm_stateeq_lp_estimates}.
We test the weak formulation with $|y|^{p-2}y$ for suitable $p$ to obtain $H^1$-estimates of $|y|^{p/2}$.
Then the exponents $q$ and $\tilde q$ are derived by applying embedding and trace theorems for $|y|^{p/2} \in H^1(\Omega)$, respectively.
Let us set
 \begin{equation}\label{eq_defp_qq}
 \frac1p =  \frac{n-1}{n-2} \left( \frac1r - \frac1{n-1}\right) = \frac{n}{n-2}  \left( \frac1s-\frac 2n\right),
 \end{equation}
which is well-defined due to \eqref{eq_compat_qq}, and $p \ge 2$ holds due to $r\ge 2\frac{n - 1}{n}$.

 We define $y_k = P_k(y_u)$ for $k \ge 1$.
Then, $|y_k|^{p-2}y_k \in H^1(\Omega) \cap L^\infty(\Omega)$ can be used as test function in the weak formulation, leading to
 \[
B(y,|y_k|^{p-2}y_k) + \int_\Omega f(\cdot,y) |y_k|^{p-2}y_k\dx = \int_\Omega g |y_k|^{p-2}y_k\dx + \int_\Gamma u|y_k|^{p-2}y_k\dx.
 \]
Using \eqref{E4.5} we get
\[\begin{split}
B(y,|y_k|^{p-2}y_k)  &= \int_\Omega \sum_{i,j = 1}^na_{ij}\partial_{x_i}y\partial_{x_j}(|y_k|^{p-2}y_k) + a_0y|y_k|^{p-2}y_k \dx \\
  &\ge \int_\Omega (p-1)\sum_{i,j = 1}^na_{ij}\partial_{x_i}y_k\partial_{x_j}y_k \cdot |y_k|^{p-2} + a_0 |y_k|^p \dx \\
&=\int_\Omega \frac{4(p-1)}{p^2} \sum_{i,j = 1}^na_{ij}\partial_{x_i}(|y_k|^{p/2})\partial_{x_j}(|y_k|^{p/2}) + a_0 (|y_k|^{p/2})^2 \dx\\
& \ge \frac{4(p-1)}{p^2}  B(|y_k|^{p/2},|y_k|^{p/2}) \ge \Lambda_B \frac{4(p-1)}{p^2}\||y_k|^{p/2}\|^2_{H^1(\Omega)},
\end{split}\]
where we used $\frac{4(p-1)}{p^2} \le 1$ for $p\ge2$.
In addition, we have $f(\cdot,y) |y_k|^{p-2}y_k \ge0$.
Hence, we arrive at the inequality
\[
\Lambda_B \frac{4(p-1)}{p^2} \| |y_k|^{p/2} \|_{H^1(\Omega)}^2 \le  \int_\Omega |g|\cdot |y_k|^{p-1}\dx + \int_\Gamma |u|\cdot |y_k|^{p-1}\dx.
\]
Using the continuity of the embedding $H^1(\Omega) \hookrightarrow L^{\frac{2n}{n-2}}(\Omega)$ and of the trace $H^1(\Omega) \hookrightarrow L^{\frac{2n - 2}{n-2}}(\Gamma)$, we infer
\begin{multline}\label{eq39}
 \| y_k \|_{L^{\frac{pn}{n-2}}(\Omega)}^p + \|y_k\|_{L^{\frac{p(n-1)}{n-2}}(\Gamma)}^p \\
 \le \frac{C_1}{\Lambda_B}  \frac{p^2}{4(p-1)} \left( \int_\Omega |g|\cdot |y_k|^{p-1}\dx + \int_\Gamma |u|\cdot |y_k|^{p-1}\dx \right),
\end{multline}
where $C_1 = C_1(n,\Omega)$.
By definition of $p$ in \eqref{eq_defp_qq}, we find
\footnote{The condition here can be written equivalently as $1-\frac1p=(1-\frac1r)\frac{n-1}{n-2}=(1-\frac1s)\frac{n}{n-2}$.}
\[
 \frac1r + \frac{p-1}{p} \frac{n-2}{n-1} =1,
 \quad
 \frac1s + \frac{p-1}{p} \frac{n-2}{n} =1,
\]
and we can apply H\"older and Young inequality to obtain
\begin{equation}\label{eq_C2}
 \| y_k \|_{L^{\frac{pn}{n-2}}(\Omega)}^p + \|y_k\|_{L^{\frac{p(n-1)}{n-2}}(\Gamma)}^p \le C_2\big(\|g\|_{L^s(\Omega)}^p + \|u\|^p_{L^r(\Gamma)}\big).
\end{equation}
The exponents in the above inequality satisfy $\frac{p(n-1)}{n-2}=q$ and $\frac{pn}{n-2}=\tilde q$ by construction.
The claim now follows by taking the limit $k\to \infty$ and Lebesgue's dominated convergence theorem. The last statement of the theorem is an immediate consequence of the first part.
\end{proof}

This theorem is similar to \cite[Theorem 18]{Brezis-Strauss1973},
where $L^{\tilde q}(\Omega)$-$L^s(\Omega)$ estimates are proven for a problem with homogeneous Dirichlet boundary condition.
Note that the above proof cannot be used to derive $L^\infty$-estimates of $y$, see \eqref{eq39}.

\subsection{Analysis of the control problem}
\label{S3.2}

By the usual approach of taking a minimizing sequence, it is immediate to establish the existence of a global minimizer of problem \Pbn with the help of Theorem \ref{T4.1}.
Observe that the weak convergence $u_k \rightharpoonup u$ in $L^2(\Gamma)$ implies the strong convergence $u_k \to u$ in $H^{-\frac{1}{2}}(\Gamma)$.
The goal of this section is to prove that any local (global) minimizer of \Pbn in the $L^2(\Gamma)$ sense is a function of $L^\infty(\Gamma)$.
For this purpose we follow the steps of section \ref{S2.4}.
Given a local minimizer $\bar u$, we take $\rho > 0$ such that $J(\bar u) \le J(u)$ for all $u$ with $\|u-\bar u\|_{L^2(\Gamma)} \le \rho$.
Now, we define the control problems:
\begin{equation}\label{eq_PbMN}
	\tag{P${}_{\mathrm{ell},M}$}
	\min J(u) + \frac12 \|u-\bar u\|_{L^2(\Gamma)}^2
\end{equation}
subject to $\|u-\bar u\|_{L^2(\Gamma)} \le\rho$ and $|u(x)| \le M \text{ f.a.a. } x \in \Gamma$.
\PbMn has at least one solution $u_M$.
Moreover, arguing as in Lemma \ref{L3.1}, we get that $u_M \to \bar u$ in $L^2(\Gamma)$ as $M\to \infty$.
Then, we select $M_0 > 0$ such that $\|u_M - \bar u\|_{L^2(\Gamma)} < \rho$ for every $M > M_0$.
For $M > M_0$, the optimality conditions satisfied by $u_M$ are written as follows
\begin{align}
&\left\{\begin{array}{l}\displaystyle A^*\varphi_M + \frac{\partial f}{\partial y}(\cdot,y_M)\varphi_M =  y_M - y_d \ \mbox{ in } \Omega,\\\partial_{\nu_{A^*}}\varphi_M  =  0\ \mbox{ on } \Gamma,\,\end{array}\right.
\label{E4.8}\\
&\int_\Gamma(\varphi_M + \alpha u_M + u_M -\bar u)(v-u_M)\dx\dt \ge 0 \quad \forall v\in L^2(\Gamma): \ |v| \le M,\label{E4.9}
\end{align}
where $y_M$ is the state associated with $u_M$ and $\varphi_M \in H^1(\Omega) \cap L^\infty(\Omega)$; see \cite[Chapter 4]{Troltzsch2010}.
Observe that $y_M \in H^1(\Omega) \cap L^\infty(\Omega)$ holds due to the assumption {\bfseries (B3)} on $g$ and the fact that $u_M \in L^\infty(\Gamma)$.
As a consequence, we also get with {\bfseries (B3)} that $\varphi_M \in L^\infty(\Omega)$.

Analogously to Theorem \ref{thm_adj_bounded} we have the following result.

\begin{theorem}
Let $\bar u$ be a local minimizer of \Pbn.
Then, $\bar u\in L^\infty(\Gamma)$ holds.
\label{T4.3}
\end{theorem}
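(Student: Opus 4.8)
The plan is to mirror the proof of Theorem~\ref{thm_adj_bounded}, replacing the parabolic tools by their elliptic counterparts. First I would invoke the localization argument described just before the statement: there are $M_0>0$ and a family $\{u_M\}_{M>M_0}$ of minimizers of \PbMn satisfying the optimality system \eqref{E4.8}--\eqref{E4.9} with $u_M\to\bar u$ in $L^2(\Gamma)$. Testing \eqref{E4.8} with $\varphi_M$ and using $\frac{\partial f}{\partial y}(\cdot,y_M)\ge0$ together with the coercivity \eqref{E4.5} shows that $\{\varphi_M\}$ is bounded in $H^1(\Omega)$, so along a subsequence $\varphi_k\rightharpoonup\varphi$ in $H^1(\Omega)$; by the compact embedding $H^1(\Omega)\hookrightarrow\hookrightarrow L^2(\Omega)$ and the compact trace $H^1(\Omega)\hookrightarrow H^{\frac12}(\Gamma)\hookrightarrow\hookrightarrow L^2(\Gamma)$ one gets $\varphi_k\to\varphi$ in $L^2(\Omega)$ and in $L^2(\Gamma)$, hence pointwise a.e.\ on $\Gamma$ after a further subsequence. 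Rewriting \eqref{E4.9} as the projection formula $u_k=P_{M_k}\!\left(-\frac1\alpha[\varphi_k|_\Gamma+u_k-\bar u]\right)$ and passing to the pointwise limit yields $\bar u=-\frac1\alpha\varphi|_\Gamma$. The whole claim thus reduces to proving that $\{\varphi_k\}$ is bounded in $L^\infty(\Omega)$, whose global, up-to-the-boundary bound controls the trace.

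Next, as in the parabolic case, I would split the adjoint equation as $\varphi_k=\phi_k-\psi_k$, where $\phi_k$ solves \eqref{E4.8} with right-hand side $y_k$ and $\psi_k$ solves it with right-hand side $y_d$, both with homogeneous Neumann data. Since $y_d\in L^p(\Omega)$ with $p>\frac n2$ by {\bfseries(B3)}, the same $L^\infty$-estimate for the monotone Neumann problem used before the theorem to conclude $\varphi_M\in L^\infty(\Omega)$ applies uniformly in $k$ (the bound is independent of $\frac{\partial f}{\partial y}(\cdot,y_k)\ge0$), giving $\{\psi_k\}$ bounded in $L^\infty(\Omega)$ and hence $\psi\in L^\infty(\Omega)$. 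It therefore remains to bound $\{\phi_k\}$ in $L^\infty(\Omega)$.

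For $\phi_k$ I would run the bootstrap. Note first that Theorem~\ref{T4.2} applies verbatim to the adjoint Neumann problem, since its proof used only the coercivity \eqref{E4.5} and the sign $f(\cdot,y)|y_k|^{p-2}y_k\ge0$, and the zeroth-order term $\frac{\partial f}{\partial y}(\cdot,y_k)\ge0$ has the same sign property. Starting from $u_k$ bounded in $L^2(\Gamma)$, Theorem~\ref{T4.2} (choosing $s$ compatibly, which is possible since $g$ is more regular than required) gives $y_k$ bounded in $L^{\tilde q}(\Omega)$; if $\tilde q>\frac n2$, the interior-source $L^\infty$-estimate closes the argument at once. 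Otherwise I would iterate the cycle: feed the interior source $y_k\in L^{\tilde q}(\Omega)$ (with $u=0$) into Theorem~\ref{T4.2} for the adjoint to raise the trace integrability of $\phi_k|_\Gamma$; use $\bar u=-\frac1\alpha\varphi|_\Gamma$ together with $\psi|_\Gamma\in L^\infty(\Gamma)$ and the projection formula to raise the $L^r(\Gamma)$-integrability of $u_k$; and re-enter the state estimate to raise the interior integrability of $y_k$. As in the parabolic proof one checks that each full cycle strictly decreases the relevant reciprocal exponent by a fixed amount, so after finitely many steps $y_k$ lies in $L^s(\Omega)$ with $s>\frac n2$, whence $\{\phi_k\}$ is bounded in $L^\infty(\Omega)$; therefore $\varphi\in L^\infty(\Omega)$ and $\bar u=-\frac1\alpha\varphi|_\Gamma\in L^\infty(\Gamma)$.

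The main obstacle is the exponent bookkeeping in this bootstrap. Unlike the parabolic argument, where only interior integrability had to be tracked, each cycle here alternates a boundary gain from the state equation ($\frac1q=\frac1r-\frac1{n-1}$) with an interior gain from the adjoint ($\frac1{\tilde q}=\frac1s-\frac2n$), the two being coupled through the compatibility condition \eqref{eq_compat_qq}. One must verify that the composite map on the reciprocal exponents is a strict contraction, that all intermediate exponents stay inside the admissible ranges of Theorem~\ref{T4.2}, and that the endpoint situations are absorbed by the ``arbitrary finite integrability'' clause of Theorem~\ref{T4.2}; this is relevant, for instance, for $n=3$, where $r=2$ is already critical, whereas for $4\le n\le6$ a single step suffices and for $n\ge7$ several cycles are genuinely needed.
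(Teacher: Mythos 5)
Your proposal is correct and follows essentially the same route as the paper, whose own ``proof'' is only a sketch stating that one repeats the argument of Theorem~\ref{thm_adj_bounded} with the estimates of Theorem~\ref{T4.2} in place of Theorem~\ref{thm_stateeq_lp_estimates}: localization via \PbMn, the projection formula yielding $\bar u=-\frac1\alpha\varphi|_\Gamma$, the splitting $\varphi_k=\phi_k-\psi_k$, and the state/adjoint bootstrap are exactly the intended ``obvious changes.'' Your bookkeeping concern also resolves affirmatively: combining $\frac1{\tilde q_j}=\frac{n-1}{nr_j}-\frac1n$ with the trace gain for the adjoint gives $\frac1{r_{j+1}}=\frac1{r_j}-\frac3{n-1}$, a fixed decrement, so the iteration terminates after finitely many cycles (immediately for $3\le n\le 6$, using the endpoint clause of Theorem~\ref{T4.2} when $n=3$).
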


The proof of this theorem follows the same arguments used to prove Theorem \ref{thm_adj_bounded} with the obvious changes.
The only difference is that we use the estimates established in Theorem \ref{T4.2} instead of the ones provided in Theorem \ref{thm_stateeq_lp_estimates}.
First we get $L^p(\Omega)$ estimates for the states $y_M$ and with them we derive $L^q(\Gamma)$ estimates for the adjoint state $\varphi_M$.

Once the $L^\infty(\Gamma)$ regularity is proved for any local minimizer of \Pbn, using the differentiability of the mapping $G:L^\infty(\Gamma) \longrightarrow H^1(\Omega) \cap L^\infty(\Omega)$, we can get the first order optimality conditions satisfied by any local minimizer $\bar u$:
\begin{align}
&\left\{\begin{array}{l}\displaystyle A\bar y + f(\cdot,\bar y) = g \ \mbox{ in } \Omega,\\\partial_{\nu_A}\bar y  =  \bar u\ \mbox{ on } \Gamma,\end{array}\right.\label{E4.10}\\
&\left\{\begin{array}{l}\displaystyle A^*\bar\varphi + \frac{\partial f}{\partial y}(\cdot,\bar y)\bar\varphi =  \bar y - y_d \ \mbox{ in } \Omega,\\\partial_{\nu_{A^*}}\bar\varphi  =  0\ \mbox{ on } \Gamma,\end{array}\right.\label{E4.11}\\
&\quad\ {\bar\varphi}_{\mid_\Gamma} + \alpha\bar u = 0.
\label{E4.12}
\end{align}
The reader is referred to \cite[Chapter 4]{Troltzsch2010}.
We have the regularity $\bar y \in H^1(\Omega) \cap C^\mu(\bar\Omega)$ and $\bar\varphi \in H^1(\Omega) \cap C^\mu(\bar\Omega)$ for some $\mu \in (0,1)$; see \cite{AR97,Murthy-Stampacchia72,Nittka2011} for the H\"older regularity.
Moreover, from \eqref{E4.12} the $H^{\frac{1}{2}}(\Gamma) \cap C^\mu(\Gamma)$ regularity of $\bar u$ follows.

\begin{remark}
The arguments used in this section can be applied to the study of the
distributed
control problem
\[
\inf_{u \in L^2(\Omega)} J(u):= \frac{1}{2}\int_\Omega[(y_u - y_d)^2 + \alpha u^2]\dx,
\]
where $y_u$ is the solution of the state equation
\[
\left\{\begin{array}{l}\displaystyle Ay + f(\cdot,y) =  u \ \mbox{ in } \Omega,\\y  =  0\ \mbox{ on } \Gamma.\end{array}\right.
\]
The problem is again well posed in $L^2(\Omega)$ and any local minimizer is a function of $H^1(\Omega) \cap C^\mu(\bar\Omega)$.
To establish the $L^\infty(\Omega)$ boundedness of the control, the arguments relies on the $L^p(\Omega)$ estimates for the states and adjoint states proved in \cite{Brezis-Strauss1973}.
The reader is referred to \cite{CKT2022} for the analysis of this problem with $L^\infty(\Omega)$ controls.
\label{R4.1}
\end{remark}

\end{document}